\documentclass{article}
\usepackage{geometry}
\usepackage{graphicx}	
\usepackage[cp1251]{inputenc}
\usepackage[english]{babel}
\usepackage{amsfonts,amssymb,mathrsfs,amscd,amsmath,amsthm}
\usepackage{verbatim}
\DeclareGraphicsExtensions{.pdf,.png,.jpg}

\def\thtext#1{
  \catcode`@=11
  \gdef\@thmcountersep{. #1}
  \catcode`@=12
}

\def\threst{
  \catcode`@=11
  \gdef\@thmcountersep{.}
  \catcode`@=12
}
\def\opt{{\operatorname{opt}}}

\theoremstyle{plain}
\newtheorem{thm}{Theorem}
\newtheorem{prop}{Proposition}[section]
\newtheorem{cor}[prop]{Corollary}
\newtheorem{ass}[prop]{Assertion}
\newtheorem{lem}[prop]{Lemma}

\theoremstyle{definition}
\newtheorem*{examp}{Example}
\newtheorem{dfn}[prop]{Definition}
\newtheorem{rk}[prop]{Remark}



 \pagestyle{myheadings}

 \catcode`@=11
 \def\.{.\spacefactor\@m}
 \catcode`@=12


\newcommand{\dl}{\delta}
\newcommand{\D}{\Delta}
\newcommand{\e}{\varepsilon}
\newcommand{\g}{\gamma}

\renewcommand{\l}{\lambda}

\renewcommand{\r}{\rho}
\newcommand{\s}{\sigma}

\renewcommand{\t}{\tau}

\def\opt{{\operatorname{opt}}}

\newcommand{\cP}{\mathcal{P}}

\newcommand{\N}{\mathbb{N}}

\newcommand{\R}{\mathbb{R}}

\def\:{\colon}

\newcommand{\rom}[1]{{\em #1}}

\renewcommand{\)}{\rom)}

\renewcommand{\:}{\colon}

\renewcommand{\c}{\circ}

\newcommand{\oPi}{\stackrel{\raise-2pt\hbox{$\c$}}\Pi}
\newcommand{\oW}{\stackrel{\raise-2pt\hbox{$\c$}}W}

\renewcommand{\ss}{\subset}
\newcommand{\x}{\times}

\newcommand{\diam}{{\operatorname{diam}}\,}

\renewcommand{\min}{{\operatorname{min}}\,}

\newcommand{\dis}{{\operatorname{dis}}\,}

\newcommand{\id}{\operatorname{id}}
\newcommand{\cR}{\mathcal{R}}
\newcommand{\cM}{\mathcal{M}}

\def\GH{\operatorname{\mathcal{G\!H}}}

\def\cB{{\cal B}}
\def\cC{{\cal C}}

\def\cH{{\cal H}}

\begin{document}
 \title{Path Connectivity of Spheres in the Gromov--Hausdorff Class}
\author{A.~Ivanov, R.~Tsvetnikov, and A.~Tuzhilin}
\maketitle

\begin{abstract}
The paper is devoted to geometrical investigation of Gromov--Hausdorff distance on the classes of all metric spaces and of all bounded metric spaces. The main attention is paid to path connectivity questions. The path connected components of the Gromov--Hausdorff class of all metric spaces are described, and the path connectivity of spheres is proved in several particular cases.

{\bf Keywords:} metric geometry, Gromov--Hausdorff distance, metric spaces, bounded metric spaces, compact metric spaces, path connectivity
\end{abstract}

\section*{Introduction}
\markright{Introduction}
Comparison of metric spaces is an important problem that is interesting as in pure research, so as in numerous applications. A natural approach is to define some distance function between metric spaces: The less similar the spaces are, the more the distance between them should be. Currently, the Gromov--Hausdorff distance and some its modifications are the most commonly used for these purposes.

The history of this distance goes back to works of F.~Hausdorff~\cite{Hausdorff}. In~1914 he defined a non-negative symmetrical function on pairs of subsets of a metric space $X$ that is equal to the infimum of the  reals  $r$ such that the first subset is contained in the $r$-neighborhood of the second one, and vice-versa. It turns out that this function satisfies the triangle inequality and further, it is a metric on the family of all closed bounded subsets of $X$. Later on, D.~Edwards~\cite{Edwards} and, independently, M.~Gromov~\cite{Gromov} generalized the Hausdorff's construction to the case of all metric spaces using their isometric embeddings into all possible ambient metric spaces, see formal definition below. The resulting function is now referred as the \emph{Gromov--Hausdorff distance}. Notice that this distance is also symmetric, non-negative and satisfies the triangle inequality. It always equals zero for a pair of isometric metric spaces, therefore in this context such spaces are usually identified. But generally speaking, the Gromov--Hausdorff distance can be infinite and also can vanish on a pair of non-isometric spaces. Nevertheless, if one restricts himself to the family $\cM$ of isometry classes of all compact metric spaces, then the Gromov--Hausdorff distance satisfies all metric axioms. The set $\cM$ endowed with the Gromov--Hausdorff distance is called the \emph{Gromov--Hausdorff space}. Geometry of this metric space turns out to be rather non-trivial and is studied actively. It is well-known that $\cM$ is path connected, Polish (i.e., complete and separable), geodesic~\cite{INT}, and also the space $\cM$ is not proper, and has no non-trivial symmetries~\cite{ITSymm}. The Gromov--Hausdorff distance is effectively used in computer graphics and computational geometry for comparison and transformation of shapes, see for example~\cite{MemSap}. It also has applications in other natural sciences, for example, C.~Sormani used this distance to prove stability of Friedmann cosmology model~\cite{Sor}. A detailed introduction to geometry of the Gromov--Hausdorff distance can be found in~\cite[Ch.~7]{BurBurIva} or in~\cite{ITlectHGH}.

The case of arbitrary metric spaces is also very interesting. For such spaces many authors use some modifications of the Gromov--Hausdorff distance. For example, pointed spaces, i.e., the spaces with marked points, are considered, see~\cite{Jen} and~\cite{Herron}. In the present paper we continue the studying of the classical Gromov--Hausdorff distance on the classes $\GH$ and $\cB$ consisting of the representatives of isometry classes of all metric spaces and of bounded metric spaces, respectively, that we started in~\cite{BIT}. Here the term ``class'' is understood in the sense of von Neumann--Bernays--G\"odel set theory (NBG) that permits to define correctly the distance on the proper class $\GH$, and to construct on it an analogue of metric topology basing on so-called filtration by sets with respect to cardinality, see below. In paper~\cite{BIT}, continuous curves in $\GH$ are defined, and it is proved that the Gromov--Hausdorff distance is an interior generalized semi-metric as on $\GH$, so as on $\cB$, i.e., the distance between points is equal to the infimum of the lengths of curves connecting these points.

In the present paper the questions of path connectivity in $\GH$ are studied, in particular, the path connected components of $\GH$ are described. One of these components is the class $\cB$ of bounded metric spaces. Notice that the geometry of those components is rather tricky, see~\cite{BogaTuz}. Also in the present paper the path connectivity of spheres is proved for several particular cases. Namely, the following results are obtained.
\begin{itemize}
\item It is shown that all spheres centered at the single-point metric space in $\GH$, in $\cB$, and in $\cM$ are path connected.
\item For each bounded metric space $X$, there exists a real $R_X$ such that all the spheres centered at $X$ with radius $r\ge R_X$ are path connected in $\cB$. And if $X$ is a compact space, then such spheres are path connected in $\cM$.
\item For each generic metric space $M$  (see the definition below), there exists a real $r_M>0$ such that all the spheres centered at $M$ with radius $r\le r_M$ are path connected in $\GH$. And if $M$ is a bounded  (respectively, compact) metric space, then such spheres are path connected in $\cB$ (in $\cM$, respectively).
\end{itemize}

\subsubsection*{Acknowledgements}
This work was done at Lomonosov Moscow State University, Interdisciplinary Science Educational School ``Mathematical Methods of Complex Systems Analysis''. The authors use this opportunity to express their deep respect to Professor Yuri M.~Smirnov, whose books, poetry and lectures about classical opera had great influence on us. The work is supported by Russian Science Foundation, Project 21--11--00355.

\section{Preliminaries}
\markright{\thesection.~Preliminaries}
\noindent Let $X$ be an arbitrary set. By $\#X$ we denote the cardinality of $X$, and let $\cP_0(X)$ be the set of all its non-empty subsets.  A symmetric mapping $d\:X\x X\to[0,\infty]$ vanishing on the pairs of the same elements is called a \emph{distance function on $X$}. If $d$ satisfies the triangle inequality, then $d$ is referred as a \emph{generalized semi-metric}. If in addition, $d(x,y)>0$ for all $x\ne y$, then $d$ is called a \emph{generalized metric}. At last, if $d(x,y)<\infty$ for all $x,y\in X$, then such distance function is called a  \emph{metric}, and sometimes a \emph{finite metric\/} to emphasize the difference with a generalized metric. A set $X$ endowed with a  (generalized) (semi-)metric is called a \emph{\(generalized\/\) \(semi-\/\)metric space}.

We need the following simple properties of metrics.

\begin{prop}\label{prop:metric}
The following statements are valid.
\begin{enumerate}
\item \label{prop:metric:2} A non-trivial non-negative linear combination of two metrics given on an arbitrary set is also a metric.
\item \label{prop:metric:3} A positive linear combination of a metric and a semi-metric given on an arbitrary set is a metric.
\end{enumerate}
\end{prop}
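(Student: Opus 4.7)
The plan is to verify each metric axiom in turn for the linear combination $d := \alpha d_1 + \beta d_2$, handling both parts of the proposition in parallel. First I would observe that symmetry of $d$, its vanishing on the diagonal, and its finiteness follow immediately from the same properties of $d_1$ and $d_2$; the finiteness uses that a (non-generalized) metric takes values in $[0,\infty)$, and likewise for a semi-metric in part (2). Next I would derive the triangle inequality by taking a non-negative linear combination of the triangle inequalities for the two summands:
\[
d(x,z) \le \alpha\bigl(d_1(x,y)+d_1(y,z)\bigr) + \beta\bigl(d_2(x,y)+d_2(y,z)\bigr) = d(x,y)+d(y,z).
\]

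The only remaining condition, and the only place where the two parts diverge, is the strict positivity $d(x,y)>0$ for $x\ne y$. In part (1), non-triviality means at least one of $\alpha,\beta$ is strictly positive; without loss of generality take $\alpha>0$. Then $\alpha d_1(x,y)>0$ since $d_1$ is a metric, and adding the non-negative term $\beta d_2(x,y)$ preserves positivity. In part (2), both coefficients are strictly positive; the essential point is that the metric summand $d_1$ already gives $\alpha d_1(x,y)>0$, so the conclusion survives even when the semi-metric $d_2$ happens to vanish on the pair $(x,y)$.

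There is no genuine obstacle here: each verification is a one-line check. The real content of the proposition is just the observation that positivity of the combination is forced by a single strictly positively weighted metric summand, which is exactly what the non-triviality hypothesis in (1) and the positivity of both coefficients in (2) are designed to guarantee.
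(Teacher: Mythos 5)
Your proof is correct and complete; the paper states this proposition without any proof, treating it as elementary, and your axiom-by-axiom verification (with the key observation that strict positivity is inherited from the strictly positively weighted metric summand) is exactly the standard argument one would supply.
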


If $X$ is a set endowed with a distance function, then the distance between its points $x$ and $y$ we usually denote by $|xy|$. To emphasize that the distance between $x$ and $y$ is calculated in the space $X$, we write $|xy|_X$. Further, if $\g\:[a,b]\to X$ is a continuous curve in $X$, then its \emph{length $|\g|$} is defined as the supremum of the ``lengths of inscribed polygonal lines'', i.e., of the values $\sum_i\big|\g(t_i)\g(t_{i+1})\big|$, where the supremum is taken over all possible partitions $a=t_1<\cdots<t_k=b$ of the segment $[a,b]$. A distance function on $X$ is called \emph{interior\/} if the distance between any two its points $x$ and $y$ equals to the infimum of lengths of curves connecting these points. A curve $\g$ whose length differs from $|xy|$ at most by $\e$ is called \emph{$\e$-shortest}. If for any pair of points $x$ and $y$ of the space $X$ there exists a curve whose length equals to the infimum of lengths of curves connecting these points and equals $|xy|$, then the distance is called \emph{strictly interior}, and the space $X$ is referred as  a \emph{geodesic space}.

Let $X$ be a metric space. For each $A,\,B\in\cP_0(X)$ and each $x\in X$ we put
\begin{flalign*}
\indent&|xA|=|Ax|=\inf\bigl\{|xa|:a\in A\bigr\},\qquad |AB|=\inf\bigl\{|ab|:a\in A,\,b\in B\bigr\},&\\
\indent&d_H(A,B)=\max\{\sup_{a\in A}|aB|,
\,\sup_{b\in B}|Ab|\}=\max\bigl\{\sup_{a\in A}\inf_{b\in B}|ab|,\,\sup_{b\in B}\inf_{a\in A}|ba|\bigr\}.
\end{flalign*}
The function $d_H\:\cP_0(X)\x\cP_0(X)\to[0,\infty]$ is called the \emph{Hausdorff distance}. It is well-known, see for example~\cite{BurBurIva} or~\cite{ITlectHGH}, that $d_H$ is a metric on the family $\cH(X)\ss\cP_0(X)$ of all non-empty closed bounded subsets of $X$.

Let $X$ and $Y$ be metric spaces. A triple $(X',Y',Z)$ consisting of a metric space $Z$ and two its subsets $X'$ and $Y'$ isometric to $X$ and $Y$, respectively, is called a \emph{realization of the pair $(X,Y)$}. The \emph{Gromov--Hausdorff distance $d_{GH}(X,Y)$ between $X$ and $Y$} is defined as the infimum of reals $r$ such that there exists a realization $(X',Y',Z)$ of the pair $(X,Y)$ with $d_H(X',Y')\le r$.

Notice that the Gromov--Hausdorff distance could take as finite, so as infinite values, and always satisfies the triangle inequality, see~\cite{BurBurIva} or~\cite{ITlectHGH}. Besides, this distance always vanishes at each pair of isometric spaces, therefore, due to triangle inequality, the Gromov--Hausdorff distance does not depend on the choice of representatives of isometry classes. There are examples of non-isometric metric spaces with zero Gromov--Hausdorff distance between them, see~\cite{Ghanaat}.

Since each set can be endowed with a metric (for example, one can put all the distances between different points to be equal to $1$), then representatives of isometry classes form a proper class. This class endowed with the Gromov--Hausdorff distance is denoted by $\GH$. Here we use the concept  \emph{class\/} in the sense of von Neumann--Bernays--G\"odel set theory (NBG).

Recall that in NBG all objects (analogues of ordinary sets) are called \emph{classes}. There are two types of classes: \emph{sets\/}  (the classes that are elements of other classes), and \emph{proper classes\/} (all the remaining classes). The class of all sets is an example of a proper class. Many standard operations are well-defined for classes. Among them are intersection, complementation, direct product, mapping, etc.

Such concepts as a \emph{distance function, \(generalized\/\) semi-metric\/} and \emph{\(generalized\/\) metric\/} are defined in the standard way for any class, as for a set, so as for a proper class, because the direct products and mappings are defined. But direct transfer of some other structures, such as topology, leads to contradictions.  For example, if we defined a topology for a proper class, then this class has to be an element of the topology, that is impossible due to the definition of proper classes. In paper~\cite{BIT} the following construction is suggested.

For each class $\cC$  consider a  ``filtration'' by subclasses $\cC_n$, each of which consists of all the elements of $\cC$ of cardinality at most $n$, where $n$ is a cardinal number. Recall that elements of a class are sets, therefore cardinality is defined for them. A class $\cC$ such that all its subclasses $\cC_n$ are sets is said to be  \emph{filtered by sets}. Evidently, if a class $\cC$ is a set, then it is filtered by sets.

Thus, let $\cC$ be a class filtered by sets. When we say that the class $\cC$ satisfies some property, we mean the following: Each set $\cC_n$ satisfies this property. Let us give several examples.
\begin{itemize}
\item Let a distance function on $\cC$ be given. It induces an ``ordinary'' distance function on each set $\cC_n$. Thus, for each $\cC_n$ the standard concepts of metric geometry such as open balls and spheres are defined and are sets. The latter permits to construct standard metric topology $\t_n$ on $\cC_n$ taking the open balls as a base of the topology. It is clear that if $n\le m$, then $\cC_n\ss\cC_m$, and the topology $\t_n$ on $\cC_n$ is induced from $\t_m$.
\item More general, a \emph{topology\/} on the class $\cC$ is defined as a family of topologies $\t_n$ on the sets  $\cC_n$ satisfying the following \emph{consistency condition}: If $n\le m$, then $\t_n$ is the topology on $\cC_n$ induced from $\t_m$. A class endowed with a topology is referred as a \emph{topological class}.
\item The presence of a topology permits to define continuous mappings from a topological space $Z$ to a topological class $\cC$. Notice that according to NBG axioms,  for any mapping $f\:Z\to\cC$ from the set  $Z$ to the class $\cC$, the image $f(Z)$ is a set, all elements of $f(Z)$ are also some sets, and hence, the union  $\cup f(Z)$ is a set of some cardinality $n$. Therefore, each element of $f(Z)$ is of cardinality at most  $n$, and so, $f(Z)\ss\cC_n$. The mapping $f$ is called \emph{continuous}, if $f$ is a continuous mapping from  $Z$ to $\cC_n$. The consistency condition implies that for any $m\ge n$, the mapping $f$ is a continuous mapping from  $Z$ to $\cC_m$, and also for any $k\le n$ such that $f(Z)\ss\cC_k$, the mapping $f$ considered as a mapping from $Z$ to $\cC_k$ is continuous.
\item The above arguments allows to define \emph{continuous curves in a topological class $\cC$}.
\item Let a class $\cC$ be endowed with a distance function and the corresponding topology. We say that the distance function is \emph{interior\/} if it satisfies the triangle inequality, and for any two elements from $\cC$ such that the distance between them is finite, this distance equals the infimum of the lengths of the curves connecting these elements.
\item Let a sequence $\{X_i\}$ of elements from a topological class $\cC$ be given. Since the family $\{X_i\}_{i=1}^\infty$ is the image of the mapping $\N\to\cC$, $i\mapsto X_i$, and $\N$ is a set, then, due to the above arguments, all the family $\{X_i\}$ lies in some $\cC_m$. Thus, the concept of  \emph{convergence\/} of a sequence in a topological class is defined, namely, the sequence converges if it converges with respect to some topology $\t_m$ such that $\{X_i\}\ss\cC_m$, and hence, with respect to any such topology.
\end{itemize}

Our main examples of topological classes are the classes $\GH$ and $\cB$ defined above. Recall that the class $\GH$ consists of representatives of isometry classes of all metric spaces, and the class $\cB$ consists of representatives of isometry classes of all bounded metric spaces. Notice that $\GH_n$ and $\cB_n$ are sets for any cardinal number $n$.

The most studied subset of $\GH$ is the set of all compact metric spaces. It is called a \emph{Gromov--Hausdorff space\/} and often denoted by $\cM$. It is well-known, see~\cite{BurBurIva, ITlectHGH, INT}, that  the Gromov--Hausdorff distance is an interior metric on $\cM$, and the metric space $\cM$ is Polish and geodesic. In paper~\cite{BIT} it is shown that the Gromov--Hausdorff distance is interior as on the class $\GH$, so as on the class $\cB$.

As a rule, to calculate the Gromov--Hausdorff distance between a pair of given metric spaces is rather difficult, and for today the distance is known for a few pairs of spaces, see for example~\cite{GrigIT_Sympl}. The most effective approach for this calculations is based on the next equivalent definition of the Gromov--Hausdorff distance, see details in~\cite{BurBurIva} or~\cite{ITlectHGH}.

Recall that a \emph{relation\/} between sets $X$ and $Y$ is defined as an arbitrary subset of their direct product $X\x Y$. Thus, $\cP_0(X\x Y)$ is the set of all non-empty relations between $X$ and $Y$.

\begin{dfn}
For any $X,Y\in\GH$ and any $\s\in\cP_0(X\x Y)$, the \emph{distortion $\dis\s$ of the relation $\s$} is defined as the following value:
$$
\dis\s=\sup\Bigl\{\bigl||xx'|-|yy'|\bigr|:(x,y),\,(x',y')\in\s\Bigr\}.
$$
\end{dfn}

A relation $R\ss X\x Y$ between sets $X$ and $Y$ is called a \emph{correspondence}, if the restrictions of the canonical projections $\pi_X\:(x,y)\mapsto x$ and $\pi_Y\:(x,y)\mapsto y$ onto $R$ are surjective. Notice that relations can be considered as partially defined multivalued mapping. Form this point of view, the correspondences are multivalued surjective mappings. For a correspondence $R\ss X\x Y$ and $x\in X$, we put $R(x)=\big\{y\in Y:(x,y)\in R\big\}$ and call $R(x)$ the \emph{image of the element $x$ under the relation $R$}. By $\cR(X,Y)$ we denote the set of all correspondences between $X$ and $Y$. The following result is well-known.

\begin{ass}\label{ass:GH-metri-and-relations}
For any $X,Y\in\GH$, it holds
$$
d_{GH}(X,Y)=\frac12\inf\bigl\{\dis R:R\in\cR(X,Y)\bigr\}.
$$
\end{ass}

We need the following estimates that can be easily proved by means of Assertion~\ref{ass:GH-metri-and-relations}. By $\D_1$ we denote the single-point metric space.

\begin{ass}\label{ass:estim}
For any $X,Y\in\GH$, the following relations are valid\/\rom:
\begin{itemize}
\item $2d_{GH}(\D_1,X)=\diam X$\rom;
\item $2d_{GH}(X,Y)\le\max\{\diam X,\diam Y\}$\rom;
\item If at least one of $X$ and $Y$ is bounded, then $\bigl|\diam X-\diam Y\bigr|\le2d_{GH}(X,Y)$.
\end{itemize}
\end{ass}

For topological spaces $X$ and $Y$, their direct product $X\x Y$ is considered as the topological space endowed with the standard topology of the direct product. Therefore, it makes sense to speak about \emph{closed relations\/} and \emph{closed correspondences}.

A correspondence $R\in\cR(X,Y)$ is called \emph{optimal\/} if $2d_{GH}(X,Y)=\dis R$. The set of all optimal correspondences between $X$ and $Y$ is denoted by $\cR_\opt(X,Y)$.

\begin{ass}[\cite{IvaIliadisTuz, Memoli}]\label{ass:optimal-correspondence-exists}
For any $X,\,Y\in\cM$, there exists as a  closed optimal correspondence, so as a realization $(X',Y',Z)$ of the pair $(X,Y)$ which the Gromov--Hausdorff distance between $X$ and $Y$ is attained at.
\end{ass}

\begin{ass}[\cite{IvaIliadisTuz, Memoli}]
For any $X,\,Y\in\cM$ and each closed optimal correspondence $R\in\cR(X,Y)$, the family $R_t$, $t\in[0,1]$, of compact metric spaces, where $R_0=X$, $R_1=Y$, and for $t\in(0,1)$ the space $R_t$ is the set $R$ endowed with the metric
$$
\bigl|(x,y),(x',y')\bigr|_t=(1-t)|xx'|+t\,|yy'|,
$$
is a shortest curve in $\cM$ connecting $X$ and $Y$, and the length of this curve equals $d_{GH}(X,Y)$.
\end{ass}

Let $X$ be a metric space and $\l>0$  a real number. By $\l\,X$ we denote the metric space obtained from $X$ by multiplication of all the distances by $\l$, i.e., $|xy|_{\l X}=\l|xy|_X$ for any $x,\,y\in X$. If the space $X$ is bounded, then for $\l=0$ we put $\l X=\D_1$.

\begin{ass} \label{ass:l1}
For any $X,Y\in\GH$ and any $\l>0$, the equality $d_{GH}(\l X,\l Y)=\l\,d_{GH}(X,Y)$ holds. If $X,Y\in\cB$, then, in addition, the same equality is valid for $\l=0$.
\end{ass}

\begin{ass}\label{ass:l1l2}
Let $X\in\cB$ and $\l_1,\l_2\ge0$. Then $2d_{GH}(\l_1 X,\l_2 X)=|\l_1-\l_2|\,\diam X$.
\end{ass}

\begin{rk}
If $\diam X=\infty$, then Assertion~\ref{ass:l1l2} is not true in general. For example, let $X=\R$. The space  $\l\,\R$ is isometric to $\R$ for any $\l>0$, therefore $d_{GH}(\l_1\R, \l_2\R)=0$ for any $\l_1,\l_2>0$.
\end{rk}

\section{Path Connectivity.}
\markright{\thesection.~Path Connectivity.}
In the present section, the path connected components of $\cM$, $\cB$, and $\GH$ are described, and the path connectivity of spheres in these classes is investigated.

\subsection{Path Connectivity Components.}
Let us start with several auxiliary results.

\begin{lem}\label{lem:diam}
If $\g\:[a,b]\to\cB$ is a continuous curve, then $\diam \g(t)$ is a continuous function.
\end{lem}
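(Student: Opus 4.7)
The plan is essentially to observe that the function $\diam$ is $2$-Lipschitz as a function from $\cB$ to $[0,\infty)$ with respect to the Gromov--Hausdorff distance, so that continuity of $\g$ will transfer directly to continuity of $\diam\circ\g$.

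First I would unpack what ``continuous curve'' means in this setting. By the construction of topology on a class filtered by sets described in Section~1, $\g\:[a,b]\to\cB$ being continuous means that $\g([a,b])$ is contained in some $\cB_n$ (this follows from the NBG-axiomatic argument that $\cup\g([a,b])$ is a set of some cardinality $n$), and that $\g$ is continuous as a map from $[a,b]$ to the metric space $(\cB_n,d_{GH})$. In particular, for any $t_0\in[a,b]$ we have $d_{GH}\bigl(\g(t),\g(t_0)\bigr)\to 0$ as $t\to t_0$.

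Next I would apply the third bullet of Assertion~\ref{ass:estim}. Since $\g(t),\g(t_0)\in\cB$ are both bounded, the assertion gives
$$
\bigl|\diam\g(t)-\diam\g(t_0)\bigr|\le 2\,d_{GH}\bigl(\g(t),\g(t_0)\bigr).
$$
In particular, $\diam\g(t)$ is finite for every $t$, and the right-hand side tends to $0$ as $t\to t_0$, so $\diam\g$ is continuous at $t_0$. Since $t_0$ was arbitrary, the function $t\mapsto\diam\g(t)$ is continuous on $[a,b]$.

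There is no serious obstacle here: the substance is entirely contained in Assertion~\ref{ass:estim}, and the only subtlety is the bookkeeping of what continuity means for maps into the topological class $\cB$, which is resolved by the filtration-by-sets construction recalled in Section~1. The boundedness hypothesis $\g(t)\in\cB$ is essential, because the third estimate of Assertion~\ref{ass:estim} requires at least one of the two spaces to be bounded; without it, the analogous statement for curves in $\GH$ would fail (and indeed $\diam$ need not even be finite-valued there).
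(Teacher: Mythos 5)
Your proof is correct and is essentially the paper's argument: the paper writes $\diam\g(t)=2d_{GH}\bigl(\g(t),\D_1\bigr)$ (the first bullet of Assertion~\ref{ass:estim}) and invokes continuity of the distance function, which is exactly the $2$-Lipschitz estimate $\bigl|\diam X-\diam Y\bigr|\le 2d_{GH}(X,Y)$ that you use via the third bullet. The additional bookkeeping about continuity into a class filtered by sets is a fine (and harmless) elaboration.
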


\begin{proof}
According to Assertion~\ref{ass:estim}, we have: $\diam\g(t)=2d_{GH}\bigl(\g(t),\D_1\bigr)$. It remains to use the fact that the distance function is continuous.
\end{proof}

\begin{lem}\label{lem:dist_inf}
If $\g\:[a,b]\to\GH$ is a continuous curve, then the distance between its ends is finite. In particular, if the distance between $X$ and $Y$ is infinite, then such $X$ and  $Y$ can not be connected by a continuous curve.
\end{lem}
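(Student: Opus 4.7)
The plan is to exploit the fact that $[a,b]$ is connected together with the triangle inequality for the (extended) Gromov--Hausdorff distance, which takes values in $[0,\infty]$. First, since $\g\:[a,b]\to\GH$ is by definition a continuous curve into the topological class, the image $\g([a,b])$ lies in some set $\GH_n$, and $\g$ is continuous with respect to the standard metric topology on $\GH_n$ whose base consists of finite-radius open balls around elements of $\GH_n$.

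Define
$$
U=\bigl\{t\in[a,b]:d_{GH}\bigl(\g(a),\g(t)\bigr)<\infty\bigr\}.
$$
Clearly $a\in U$, so $U\ne\0$. I would then show that $U$ is clopen in $[a,b]$. For openness, take $t\in U$: by continuity of $\g$ there is a neighborhood $V$ of $t$ with $d_{GH}\bigl(\g(s),\g(t)\bigr)<1$ for every $s\in V$, hence by the triangle inequality $d_{GH}\bigl(\g(a),\g(s)\bigr)\le d_{GH}\bigl(\g(a),\g(t)\bigr)+1<\infty$, so $V\ss U$. For closedness, if $t\in[a,b]\sm U$, then $d_{GH}\bigl(\g(a),\g(t)\bigr)=\infty$; again pick a neighborhood $V$ of $t$ where $d_{GH}\bigl(\g(s),\g(t)\bigr)<1$, and use the triangle inequality in the form $d_{GH}\bigl(\g(a),\g(s)\bigr)\ge d_{GH}\bigl(\g(a),\g(t)\bigr)-d_{GH}\bigl(\g(s),\g(t)\bigr)$ (valid because the right-hand side is $\infty$), to conclude $V\ss[a,b]\sm U$.

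Since $[a,b]$ is connected and $U$ is non-empty and clopen, $U=[a,b]$, whence $b\in U$ and $d_{GH}\bigl(\g(a),\g(b)\bigr)<\infty$. The ``in particular'' clause follows immediately: if $d_{GH}(X,Y)=\infty$, then no continuous curve can have $X$ and $Y$ as its ends.

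I do not expect a real obstacle here; the only subtlety is a notational one, namely checking that ``continuity of $\g$'' in the sense of the filtered-by-sets class $\GH$ is indeed the ordinary continuity into $\GH_n$ with the metric topology, so that the triangle-inequality manipulation above is legitimate. This is built into the definition of continuous curve in $\GH$ recalled in the Preliminaries, so once this is noted the argument is just a standard connectedness argument for extended (semi-)metric spaces.
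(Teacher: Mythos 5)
Your proof is correct, and it takes a genuinely different route from the paper's. You argue via connectedness of $[a,b]$: the set $U$ of parameters $t$ with $d_{GH}(\g(a),\g(t))<\infty$ is non-empty and clopen (both directions being immediate from continuity plus the triangle inequality for the extended distance), hence equals $[a,b]$. The paper instead uses compactness of the image $\g([a,b])$: it covers the image by finitely many balls of a fixed finite radius, notes that any two points in one such ball are at finite distance, and then invokes the fact (proved in~\cite{BIT}) that $d_{GH}$ is interior to connect the endpoints by a ``polygonal line'' of finite length. Your argument is the more economical of the two: it needs only the triangle inequality and the connectedness of the parameter interval, whereas the paper's proof leans on the nontrivial interiority theorem (which is not really needed for mere finiteness of the distance) and leaves implicit the chaining step that links the finitely many balls along the connected image. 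Your closing remark about what ``continuity'' means for a curve into the filtered class $\GH$ --- that $\g$ is by definition an ordinary continuous map into some set $\GH_n$ with its metric topology, so that finite-radius open balls are available as basic neighborhoods --- is exactly the point one must check, and it is indeed supplied by the definitions in the Preliminaries.
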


\begin{proof}
Indeed, the image of the segment $[a,b]$ under the continuous mapping $\g$ is a compact subset. Therefore, it can be covered by a finite number of balls of a fixed radius. The distance between any two points from such ball is finite. Since the Gromov--Hausdorff distance is interior, then such points can be connected by a curve of finite length. The latter implies that the endpoints of the curve $\g$ can be connected by a  ``polygonal line'' of a finite length.
\end{proof}

\begin{lem}\label{lem:fin_inf}
If $\g\:[a,b]\to\GH$ is a continuous curve, then either $\diam \g(t)=\infty$ for all $t$, or $\diam
\g(t)<\infty$ for all $t$, i.e., in the latter case, the $\g$ is a continuous curve in $\cB$.
\end{lem}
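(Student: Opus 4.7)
The plan is to split $[a,b]$ according to whether $\diam\gamma(t)$ is finite, and rule out the mixed case by playing Lemma~\ref{lem:dist_inf} against the third bullet of Assertion~\ref{ass:estim}.

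First I would set $A=\{t\in[a,b]:\diam\gamma(t)<\infty\}$ and $B=\{t\in[a,b]:\diam\gamma(t)=\infty\}$, so that $A\sqcup B=[a,b]$, and suppose for contradiction that both $A$ and $B$ are non-empty. Pick $t_1\in A$ and $t_2\in B$. The restriction of $\gamma$ to the closed subsegment with endpoints $t_1$ and $t_2$ is again a continuous curve in $\GH$, so Lemma~\ref{lem:dist_inf} applied to this subcurve yields $d_{GH}\bigl(\gamma(t_1),\gamma(t_2)\bigr)<\infty$.

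On the other hand, $\gamma(t_1)$ is bounded by the choice of $t_1$, so the third item of Assertion~\ref{ass:estim} applies and gives
$$
\bigl|\diam\gamma(t_1)-\diam\gamma(t_2)\bigr|\le 2\,d_{GH}\bigl(\gamma(t_1),\gamma(t_2)\bigr).
$$
The left-hand side equals $\infty$ since $\diam\gamma(t_2)=\infty$, contradicting the finiteness obtained in the previous step. Hence one of $A$, $B$ must be empty, which is the dichotomy stated in the lemma. The closing phrase ``in the latter case, $\gamma$ is a continuous curve in $\cB$'' is then automatic: the image $\gamma([a,b])$ lies in $\cB$, and the consistency condition in the definition of topology on the filtered class $\GH$ (namely, the topologies on $\cB_n$ are induced from those on $\GH_n$) makes continuity of $\gamma$ into $\GH$ equivalent to continuity into $\cB$.

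I do not anticipate any real obstacle here; the only thing to verify is that the restriction of a continuous curve to a subsegment is again continuous, which is standard, so Lemma~\ref{lem:dist_inf} legitimately applies and the remainder is a one-line arithmetic contradiction with $\infty$.
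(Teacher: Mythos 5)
Your proof is correct and follows essentially the same route as the paper: both derive a contradiction from having one point of finite and one of infinite diameter on the curve, using Lemma~\ref{lem:dist_inf} for finiteness of the distance and the fact that a bounded and an unbounded space are at infinite Gromov--Hausdorff distance (which you justify, reasonably, via the third item of Assertion~\ref{ass:estim}). No gaps.
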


\begin{proof}
It is clear that if $X,\,Y\in\GH$, $\diam X<\infty$ and $\diam Y=\infty$, then $d_{GH}(X,Y)=\infty$, but the distance between any two  points of a continuous curve is finite according to Lemma~\ref{lem:dist_inf}, that implies the lemma's statement.
\end{proof}

\begin{cor}\label{cor:path_conn_GH}
Two spaces $X,\,Y\in\GH$ can be connected by a continuous curve in $\GH$, if and only if $d_{GH}(X,Y)<\infty$. In particular, $\GH$ is not path connected, and its path connected components are the classes of spaces with pairwise finite mutual distances. One of such components coincides with the class $\cB$.
\end{cor}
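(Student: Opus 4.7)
The plan is to prove the equivalence by citing Lemma~\ref{lem:dist_inf} in one direction and invoking the interiority of the Gromov--Hausdorff distance on $\GH$ (established in~\cite{BIT} and recalled in the preliminaries) in the other, and then to repackage the result as a description of path connected components via the obvious equivalence relation.

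First I would handle the ``only if'' direction: if a continuous curve $\g\:[a,b]\to\GH$ joins $X$ to $Y$, then Lemma~\ref{lem:dist_inf} tells us $d_{GH}(X,Y)<\infty$, so nothing is left to do. For the ``if'' direction, suppose $d_{GH}(X,Y)<\infty$. Since $d_{GH}$ is an interior distance function on $\GH$, the value $d_{GH}(X,Y)$ equals the infimum of lengths of continuous curves in $\GH$ connecting $X$ to $Y$. If no such curve existed, this infimum would be $+\infty$ by convention, contradicting finiteness; hence a curve (in fact, curves of length arbitrarily close to $d_{GH}(X,Y)$) must exist. This is the step whose justification relies essentially on an external result; if a reader wanted a self-contained argument I would instead quote the explicit curve constructions from~\cite{BIT} that realize the interiority.

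Next I would turn to the structural corollaries. Define the binary relation on $\GH$ by $X\sim Y\iff d_{GH}(X,Y)<\infty$. Reflexivity and symmetry are obvious; transitivity follows from the triangle inequality $d_{GH}(X,Z)\le d_{GH}(X,Y)+d_{GH}(Y,Z)$. The equivalence classes of $\sim$ therefore partition $\GH$, and by the equivalence just proved they coincide with the path connected components. To see that $\GH$ is not path connected it is enough to exhibit two spaces with infinite mutual Gromov--Hausdorff distance, e.g.\ $\D_1$ and any unbounded space, the infiniteness of the distance following from Assertion~\ref{ass:estim} (the first bullet: $2d_{GH}(\D_1,X)=\diam X$).

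Finally, to identify $\cB$ as one of the components I would argue both inclusions. If $X,Y\in\cB$, the second bullet of Assertion~\ref{ass:estim} gives $2d_{GH}(X,Y)\le\max\{\diam X,\diam Y\}<\infty$, so $X\sim Y$ and all of $\cB$ sits inside a single component. Conversely, if $X\in\cB$ and $X\sim Y$, pick a continuous curve $\g$ from $X$ to $Y$; by Lemma~\ref{lem:fin_inf} either $\diam\g(t)<\infty$ for all $t$ or $\diam\g(t)=\infty$ for all $t$, and since the first alternative holds at $t=a$ we conclude $\diam Y<\infty$, i.e.\ $Y\in\cB$. I do not foresee any real obstacle: the entire corollary is essentially a bookkeeping argument on top of Lemmas~\ref{lem:dist_inf} and~\ref{lem:fin_inf} and the interiority statement, and the only delicate point is the conventional but necessary observation that an interior distance being finite forces the set of connecting curves to be nonempty.
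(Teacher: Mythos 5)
Your proposal is correct and follows essentially the same route as the paper: Lemma~\ref{lem:dist_inf} for the ``only if'' direction, interiority of $d_{GH}$ for the ``if'' direction, and Assertion~\ref{ass:estim} to place $\cB$ inside a single component. You are in fact slightly more complete than the paper's own proof, which only notes that $\cB$ lies in one component, whereas you also use Lemma~\ref{lem:fin_inf} to show $\cB$ is the whole component.
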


\begin{proof}
Indeed, if the distance between a pair of points in $\GH$ is infinite, then, due to Lemma~\ref{lem:dist_inf}, there is no a continuous curve connecting them, and if the distance is finite, then such continuous curve exists because the distance is interior. Finally, the distance between any two spaces from $\cB$ is finite according to Assertion~\ref{ass:estim}.
\end{proof}

\begin{lem}\label{lem:scalar}
If  $\l(t)$, $t\in[a,b]$, is a continuous non-negative function and $\g\:[a,b]\to\cB$ a continuous curve, then  $\l(t)\g(t)$ is a continuous curve in $\cB$.
\end{lem}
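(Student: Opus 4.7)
The plan is to fix an arbitrary $t_0\in[a,b]$ and estimate $d_{GH}\bigl(\l(t)\g(t),\l(t_0)\g(t_0)\bigr)$ by inserting an intermediate space $\l(t)\g(t_0)$ and applying the triangle inequality:
\[
d_{GH}\bigl(\l(t)\g(t),\,\l(t_0)\g(t_0)\bigr)\le d_{GH}\bigl(\l(t)\g(t),\,\l(t)\g(t_0)\bigr)+d_{GH}\bigl(\l(t)\g(t_0),\,\l(t_0)\g(t_0)\bigr).
\]
The first term is controlled by Assertion~\ref{ass:l1}, which gives
$d_{GH}\bigl(\l(t)\g(t),\,\l(t)\g(t_0)\bigr)=\l(t)\,d_{GH}\bigl(\g(t),\g(t_0)\bigr)$. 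Since $\l$ is continuous on the compact segment $[a,b]$, it is bounded by some constant $C$, and continuity of $\g$ at $t_0$ forces $d_{GH}\bigl(\g(t),\g(t_0)\bigr)\to0$; hence the first term tends to $0$.

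For the second term, I would apply Assertion~\ref{ass:l1l2} to the single bounded space $\g(t_0)$, which yields
\[
2d_{GH}\bigl(\l(t)\g(t_0),\,\l(t_0)\g(t_0)\bigr)=|\l(t)-\l(t_0)|\,\diam\g(t_0).
\]
Since $\g(t_0)\in\cB$ its diameter is finite, and continuity of $\l$ gives $|\l(t)-\l(t_0)|\to0$, so this term vanishes in the limit as well. Combining the two estimates yields continuity of $t\mapsto\l(t)\g(t)$ at $t_0$, and since $t_0$ was arbitrary, continuity on all of $[a,b]$ follows. It remains to verify that $\l(t)\g(t)\in\cB$ for every $t$, which is immediate: multiplying all distances of a bounded space by a finite scalar, or collapsing to $\D_1$ when $\l(t)=0$, keeps the result in $\cB$.

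There is no real obstacle here; the only points requiring care are the boundary case $\l(t_0)=0$ (handled by the convention $0\cdot X=\D_1$ combined with the extension of Assertions~\ref{ass:l1} and~\ref{ass:l1l2} to $\l=0$ for bounded $X$) and the use of boundedness of $\g(t_0)$, which is essential because otherwise $\diam\g(t_0)=\infty$ and the second estimate would be vacuous — this is exactly why the lemma is stated for curves in $\cB$ rather than in $\GH$.
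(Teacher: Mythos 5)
Your proof is correct and follows essentially the same route as the paper: a triangle inequality through the intermediate space with one factor changed at a time, controlling one term via the homogeneity $d_{GH}(\l X,\l Y)=\l\,d_{GH}(X,Y)$ of Assertion~\ref{ass:l1} and the other via Assertion~\ref{ass:l1l2}. The paper merely packages the same estimate as continuity of the two-variable map $(\l,X)\mapsto\l X$ on $[0,\infty)\x\cB$, while you argue pointwise at $t_0$; your explicit treatment of the case $\l(t_0)=0$ and of the bound on $\l$ is a welcome addition.
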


\begin{proof}
Consider the mapping $F\:[0,\infty)\x\cB\to\cB$ defined as $F\:(\l,X)\mapsto\l\,X$. We will show that $F$ is continuous. Notice that
$$
d_{GH}(\l X,\mu Y)\le d_{GH}(\l X,\mu X)+d_{GH}(\mu X,\mu Y)=\frac12|\l-\mu|\,\diam X+\mu\,d_{GH}(X,Y).
$$
Further, for any $\e>0$, there exists $\dl>0$ such that for any pair $(\mu,Y)$ with $d_{GH}(X,Y)<\dl$ and $|\l-\mu|<\dl$, the inequalities $\frac12|\l-\mu|\,\diam X<\e/2$ and $\mu\,d_{GH}(X,Y)<\e/2$ are valid, and hence, $d_{GH}(\l X,\mu Y)<\e$, and the latter means continuity of $F$. It remains to notice that $\l(t)\g(t)=F\bigl(\l(t),\g(t)\bigr)$.
\end{proof}

\subsection{Path Connectivity of Spheres Centered at Single-Point Metric Space}

For a metric space $X\in\GH$ and a real $r\ge0$, we define the \emph{sphere $S_r(X)$} and the \emph{ball $B_r(X)$ in $\GH$} in the standard way:
$$
S_r(X)=\{Y:d_{GH}(X,Y)=r\}\ \ \text{and}\ \ B_r(X)=\{Y:d_{GH}(X,Y)\le r\}.
$$
As usual, $X$ and $r$ are called the \emph{center\/} and the \emph{radius\/} of the sphere $S_r(X)$ and of the ball $B_r(X)$, respectively. Notice that for $r>0$, the spheres and the balls are proper subclasses in $\GH$. Also notice that for $X\in \cB$, the spheres and the balls in the class $\cB$ (in the space $\cM$ for $X\in\cM$) can be defined as the intersections of $S_r(X)$ and $B_r(X)$ with $\cB$ (with $\cM$, respectively).

\begin{lem}\label{lem:geo}
Let $A$ and $B$ be arbitrary spaces in $\cB$ with nonzero diameters. Then for any positive $\e$ such that $2\e<\min\{\diam A,\diam B\}$, any $\e$-shortest curve connecting $A$ and $B$ does not pass through $\D_1$.
\end{lem}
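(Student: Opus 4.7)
The plan is to turn the hypothesis $2\e<\min\{\diam A,\diam B\}$ into a direct length comparison using Assertion~\ref{ass:estim}. The key observation is that passing through $\D_1$ forces a curve to be long, because $\D_1$ is far from any space of positive diameter in a precise quantitative sense: $2d_{GH}(\D_1,X)=\diam X$.

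First I would suppose, for contradiction, that some $\e$-shortest curve $\g\:[a,b]\to\cB$ with $\g(a)=A$, $\g(b)=B$ satisfies $\g(c)=\D_1$ for some $c\in[a,b]$. Splitting the curve at $c$ and using that the length of a continuous curve bounds below the Gromov--Hausdorff distance between its endpoints, I get
$$
|\g|\ge d_{GH}(A,\D_1)+d_{GH}(\D_1,B)=\frac{\diam A+\diam B}{2}.
$$
On the other hand, since $\g$ is $\e$-shortest, $|\g|\le d_{GH}(A,B)+\e$, and Assertion~\ref{ass:estim} gives $d_{GH}(A,B)\le\frac12\max\{\diam A,\diam B\}$. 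Combining,
$$
\frac{\diam A+\diam B}{2}\le\frac{\max\{\diam A,\diam B\}}{2}+\e,
$$
which rearranges to $\min\{\diam A,\diam B\}\le 2\e$, contradicting the hypothesis.

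There is no real obstacle here; the only point that deserves care is justifying the first displayed inequality, i.e.\ that the length of a curve in the topological class $\cB$ is additive under splitting and bounds the Gromov--Hausdorff distance between its endpoints. This follows directly from the definition of length as the supremum over inscribed polygonal lines (applied to partitions that include the splitting point $c$), together with the fact that $d_{GH}$ is a genuine metric on $\cB$ whose values are finite by Assertion~\ref{ass:estim}. Everything else is a one-line arithmetic manipulation of the bound from Assertion~\ref{ass:estim}.
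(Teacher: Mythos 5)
Your argument is correct and is essentially identical to the paper's own proof: both split the curve at $\D_1$, bound $|\g|$ from below by $\frac12(\diam A+\diam B)$ via $2d_{GH}(\D_1,X)=\diam X$, bound it from above by $\frac12\max\{\diam A,\diam B\}+\e$ using the $\e$-shortest property and Assertion~\ref{ass:estim}, and derive the contradiction $\min\{\diam A,\diam B\}\le2\e$. Your extra remark justifying additivity of length under splitting is a harmless elaboration of a step the paper takes for granted.
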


\begin{proof}
Assume the contrary, and let $\g$ be an $\e$-shortest curve passing through $\D_1$. Denote by $\g_1$ and $\g_2$ the segments of $\g$ between $A$ and $\D_1$ and between $\D_1$ and $B$, respectively. Then $|\g|=|\g_1|+|\g_2|\ge d_{GH}(A,\D_1)+d_{GH}(\D_1,B)=\frac12(\diam A+\diam B)$. On the other hand, $|\g|\le d_{GH}(A,B)+\e\le\frac12\max\{\diam A,\diam B\}+\e$, and hence,  $\diam A+\diam B\le\max\{\diam A,\diam B\}+2\e$. The latter inequality is not valid for $2\e<\min\{\diam A,\diam B\}$, a contradiction.
\end{proof}

\begin{thm}\label{thm:1}
Each sphere centered at the single-point metric space $\D_1$ is path connected.
\end{thm}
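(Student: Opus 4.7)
By Assertion~\ref{ass:estim}, $S_r(\D_1) = \{X : \diam X = 2r\} \subset \cB$ for finite $r$; the case $r = 0$ gives $\{\D_1\}$, trivially path connected, so I focus on $r > 0$. The plan is to fix the ``tripod'' $T_3 = \{q_0, q_1, q_2\}$ with $|q_0 q_i| = r$ and $|q_1 q_2| = 2r$ (clearly $T_3 \in S_r(\D_1)$) and show that every $X \in S_r(\D_1)$ can be joined to $T_3$ by a continuous curve lying inside the sphere; this common-target argument then delivers path connectivity.

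I construct the curve $X \to \tilde X \to T_3$ in two explicit stages, both supported on the underlying set $X \cup \{p_1, p_2\}$ with two new abstract points. \emph{Stage 1} grows the new points out of a fixed base point $x_0 \in X$: for $s \in [0,1]$, define $Y_s$ by leaving distances inside $X$ unchanged and setting $|p_i x|_s := (1-s)|x_0 x|_X + sr$, $|p_1 p_2|_s := 2rs$. At $s = 0$ the two new points collapse onto $x_0$ in the metric quotient, so $Y_0$ is isometric to $X$; at $s = 1$ I obtain $\tilde X := Y_1$, with $|p_i x| = r$ and $|p_1 p_2| = 2r$. \emph{Stage 2} then crushes $X$ to a single point: for $t \in [0,1]$ define $Z_t$ on the same set by keeping $|p_i x|_t = r$ and $|p_1 p_2|_t = 2r$, and replacing the $X$-metric by $(1-t)d_X$. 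Then $Z_0 = \tilde X$, and $Z_1$, after quotienting out the collapsed $X$, is isometric to $T_3$.

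For each stage I need to verify three things. The triangle inequality at every parameter: a finite case analysis on how $X$-points and $p_i$'s are distributed in a given triple, each case reducing to the triangle inequality in $X$ together with the global bound $|xy|_X \le 2r$. Diameter $= 2r$ throughout: supplied by $\diam X$ itself in Stage 1, and by $|p_1 p_2|_t = 2r$ in Stage 2, every other distance being manifestly $\le 2r$. Continuity in $\GH$: via Assertion~\ref{ass:GH-metri-and-relations}, the identity correspondence gives $d_{GH}(Y_s, Y_{s'}) \le r|s - s'|$ and $d_{GH}(Z_t, Z_{t'}) \le r|t - t'|$, while the terminal limits are handled by ``collapsing'' correspondences — pairing $p_1, p_2$ with $x_0$ at $s = 0$, and pairing all of $X$ with the quotient point at $t = 1$ — with distortions $\le 2rs$ and $\le 2r(1-t)$ respectively.

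The same construction treats all three ambient classes simultaneously. Since $S_r(\D_1) \subset \cB$ for finite $r$, path connectivity in $\GH$ and in $\cB$ coincide on this sphere; and each intermediate space is obtained from $X$ by adjoining two points and rescaling the $X$-metric by a nonnegative factor, so compactness is preserved except at the terminal $t = 1$, whose quotient is $T_3$ — compact anyway. Hence the same curve lies in $\cM$ whenever $X \in \cM$. I expect the main technical grind to be the triangle inequality verifications in Stage~1, where the mixed formula $|p_i x|_s = (1-s)|x_0 x| + sr$ forces one to check four triple-type cases; but each reduces, via $\diam X = 2r$ and the triangle inequality in $X$, to an elementary arithmetic inequality.
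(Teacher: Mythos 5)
Your proof is correct, but it takes a genuinely different route from the paper's. The paper connects two given points $A,B\in S_r(\D_1)$ by an $\e$-shortest curve in $\cB$ (this uses the nontrivial fact from~\cite{BIT} that $d_{GH}$ is interior on $\cB$), checks via Lemma~\ref{lem:geo} that such a curve avoids $\D_1$, and then radially projects it onto the sphere by the rescaling $t\mapsto\bigl(2r/\diam\g(t)\bigr)\g(t)$, invoking the continuity of $\diam$ and of scalar multiplication (Lemmas~\ref{lem:diam} and~\ref{lem:scalar}). You instead connect every $X$ with $\diam X=2r$ to a fixed three-point target by an explicit two-stage linear deformation of distance functions, controlling everything through the identity correspondence and Assertion~\ref{ass:GH-metri-and-relations}. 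Your argument is more elementary and self-contained: it needs neither the interiority of the distance nor any of the auxiliary continuity lemmas, and it makes the restriction to $\cB$ and $\cM$ immediate since every intermediate space is bounded of diameter $2r$ and is compact whenever $X$ is. The paper's argument, in exchange, is shorter given its lemma infrastructure, and its rescaling idea is reused verbatim in the proof of Theorem~\ref{thm:bounded} for large spheres, which your construction would not directly give. Two small points of hygiene in your write-up: at the parameter values $s=0$ and $t=1$ your distance functions vanish on some pairs of distinct points, so you must pass to metric quotients there (you do note this, and $d_{GH}$ does not see the difference); and the two stages should be concatenated and reparametrized into a single map $[0,1]\to\GH$, which lands in $\GH_n$ for $n=\#X+2$, so continuity in the filtration sense is exactly your Lipschitz estimate $d_{GH}\le r|s-s'|$.
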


\begin{proof}
Let $S=S_r(\D_1)$ be the sphere of radius $r>0$ centered at the single-point space $\D_1$. Then $S\ss\cB$. Consider arbitrary $A,\,B \in S$ and fix an $\e$, $0<2\e<\min\{\diam A,\diam B\}$. Consider an $\e$-shortest curve $\g(t)$ connecting $A$ and $B$. According to Lemmas~\ref{lem:fin_inf} and~\ref{lem:geo}, the diameter of each space $\g(t)$ is finite and does not equal zero. Define a mapping $\dl\:[a,b]\to\mathcal{\cB}$ as follows: $\dl\:t\mapsto\bigl(2r/\diam\g(t)\bigr)\g(t)$. Due to Lemma~\ref{lem:diam}, the function $2r/\diam\g(t)$ is continuous, and so, due to Lemma~\ref{lem:scalar}, we conclude that $\dl(t)$ is a continuous curve. And $\diam \dl(t)=2r$, therefore the curve $\dl$ lies in the sphere $S$. Theorem is proved.
\end{proof}

If $A,\,B\in S_r(\D_1)$ are compact spaces, then one can change the $\e$-shortest curve in the proof of Theorem~\ref{thm:1} by a shortest curve in $\cM$. This implies the following result.

\begin{cor}
Any sphere $S_r(\D_1)$ in $\cM$ centered at the single-point compact space $\D_1$ is linear connected.
\end{cor}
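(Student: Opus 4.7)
The plan is to mimic the proof of Theorem~\ref{thm:1} verbatim but, exploiting compactness, replace the $\e$-shortest curve by a genuine shortest curve in $\cM$. Given $A,B\in S_r(\D_1)\cap\cM$ with $r>0$, I would fix a closed optimal correspondence $R\in\cR(A,B)$, whose existence is guaranteed by Assertion~\ref{ass:optimal-correspondence-exists}, and then take $\g\:[0,1]\to\cM$ to be the shortest curve $t\mapsto R_t$ supplied by the assertion immediately following Assertion~\ref{ass:optimal-correspondence-exists}. By construction $|\g|=d_{GH}(A,B)$, and crucially each $R_t$ is compact, so $\g$ takes values in $\cM$.

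The next step is to verify that $\g$ avoids $\D_1$. I would revisit the argument of Lemma~\ref{lem:geo} in the limiting case $\e=0$: were $\g$ to pass through $\D_1$, splitting it at that point would give $|\g|\ge d_{GH}(A,\D_1)+d_{GH}(\D_1,B)=\tfrac12(\diam A+\diam B)$, whereas $|\g|=d_{GH}(A,B)\le\tfrac12\max\{\diam A,\diam B\}$ by Assertion~\ref{ass:estim}; together these force $\min\{\diam A,\diam B\}\le 0$. But the first item of Assertion~\ref{ass:estim} gives $\diam A=\diam B=2r>0$, a contradiction. Hence $\diam\g(t)>0$ for every $t\in[0,1]$.

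Finally, I would set $\dl(t)=\bigl(2r/\diam\g(t)\bigr)\g(t)$ exactly as in Theorem~\ref{thm:1}. By Lemma~\ref{lem:diam} the scalar factor is a continuous positive function of $t$; Lemma~\ref{lem:scalar}, whose proof uses only Assertion~\ref{ass:l1l2} and continuity of $d_{GH}$, applies without change on $\cM$, because positive scaling preserves compactness. Thus $\dl$ is a continuous curve in $\cM$, and since $\diam\dl(t)=2r$ for all $t$, it lies in $S_r(\D_1)$ and joins $A$ to $B$. The only point requiring a second look is the extension of Lemma~\ref{lem:geo} to the degenerate case $\e=0$, which is precisely where the positivity of $\diam A$ and $\diam B$ is used; everything else is a routine transcription of the preceding theorem.
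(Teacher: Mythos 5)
Your proposal is correct and follows exactly the route the paper intends: it replaces the $\e$-shortest curve of Theorem~\ref{thm:1} by the genuine shortest curve in $\cM$ coming from a closed optimal correspondence, checks (via the $\e=0$ version of Lemma~\ref{lem:geo} and $\diam A=\diam B=2r>0$) that this curve avoids $\D_1$, and then rescales to diameter $2r$. The extra care you take in verifying that scaling preserves compactness and that the endpoints are fixed by the rescaling is exactly the routine verification the paper leaves implicit.
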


\subsection{Path Connectivity of Large Spheres Centered at an Arbitrary Bounded Metric Space}
We start with several technical results. The next lemma is a special kind of Implicit Function Theorem.

\begin{lem}\label{lem:neyavnaya}
Let $F\:[T_0,T_1]\x[S_0,S_1]\to\R$ be a continuous function that satisfies the following conditions\/\rom:
\begin{enumerate}
\item For each $s\in[S_0,S_1]$, the function  $f_s(t)=F(t,s)$ is strictly monotonic\/\rom;
\item There exists an $r$ such that for all $s\in[S_0,S_1]$ the inequality $F[T_0,s]\le r\le F[T_1,s]$ holds.
Then the set $\big\{(t,s):F(t,s)=r\big\}$ is an image of embedded continuous curve of the form $\g(s) = \big(t(s),s\big)$.
\end{enumerate}
\end{lem}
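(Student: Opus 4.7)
The plan is to construct the curve $\g$ pointwise: for each $s \in [S_0,S_1]$ I define $t(s)$ as the unique solution in $[T_0,T_1]$ of the equation $F(t,s)=r$, and then set $\g(s)=\bigl(t(s),s\bigr)$. Existence of $t(s)$ is immediate: the map $f_s\:[T_0,T_1]\to\R$ is continuous, and by condition~(2) the value $r$ lies in the interval $\bigl[f_s(T_0),f_s(T_1)\bigr]$ (or $\bigl[f_s(T_1),f_s(T_0)\bigr]$, if $f_s$ is decreasing), so the Intermediate Value Theorem gives at least one preimage. Uniqueness is an immediate consequence of the strict monotonicity assumption in~(1). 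This defines $t\:[S_0,S_1]\to[T_0,T_1]$ unambiguously, and clearly the image of $\g(s)=\bigl(t(s),s\bigr)$ is exactly the level set $\bigl\{(t,s):F(t,s)=r\bigr\}$.

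The main issue, and the one step that is not formal, is proving that $s\mapsto t(s)$ is continuous. I would use a standard compactness/uniqueness argument: fix $s\in[S_0,S_1]$ and a sequence $s_n\to s$. Since $[T_0,T_1]$ is compact, the sequence $t(s_n)$ has convergent subsequences; let $t(s_{n_k})\to t^*$ be any of them. By continuity of $F$,
$$
r=F\bigl(t(s_{n_k}),s_{n_k}\bigr)\to F(t^*,s),
$$
so $F(t^*,s)=r$, and by uniqueness of the solution for the parameter $s$ we get $t^*=t(s)$. Thus every convergent subsequence of $\{t(s_n)\}$ has the same limit $t(s)$, which forces $t(s_n)\to t(s)$. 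Continuity of $\g$ follows.

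Finally, $\g$ is automatically an embedding: the projection onto the second coordinate sends $\g(s)$ to $s$, so $\g$ is injective and its inverse (from the image, with the subspace topology of $[T_0,T_1]\x[S_0,S_1]$) is the restriction of this continuous projection. Hence $\g$ is a homeomorphism onto its image, which is the required embedded curve. I expect the continuity step to be the only non-routine part; the existence/uniqueness/embedding pieces are essentially one-liners using IVT, strict monotonicity, and the graph structure of $\g$.
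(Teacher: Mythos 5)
Your proposal is correct and follows essentially the same route as the paper: define $t(s)$ via the Intermediate Value Theorem with uniqueness from strict monotonicity, then establish continuity by a compactness/subsequence argument using continuity of $F$ and uniqueness of the solution (the paper phrases this as a proof by contradiction, but it is the same argument). The additional remark that $\g$ is automatically an embedding via the second-coordinate projection is a small, correct supplement that the paper leaves implicit.
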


\begin{proof}
Since the function $f_s(t)$ is strictly monotonic and continuous, and $f_s(T_0)\le r\le f_s(T_1)$, then for each $s$ there exists unique $t=t(s)$ such that $f_s\big(t(s)\big)=r$. Put $\g(s)=\big(t(s),s\big)$. Notice that  $F\big(\g(s)\big)=r$ by definition of $\g$. It remains to show that function $t(s)$ is continuous.

Assume to the contrary that $t(s)$ is discontinuous at some point $s_0$, and let $t_0=t(s_0)$. Then there exists an $\e>0$ and a sequence $s_i\to s_0$ as $i\to\infty$ such that $\bigl|t_0-t(s_i)\bigr|\ge\e$. Due to compactness arguments, the sequence $t_i=t(s_i)$ contains a convergent subsequence. Assume without loss of generality that the entire sequence $t_i$ converges to some $t'$. Since $|t_0-t_i|\ge\e$ for all $i$, then $t'\ne t_0$. Since  $F$ is continuous and $F(s_i,t_i)=r$, then $F(s_0,t')=r$ too. But $F(s_0,t_0)$ is also equal to $r$, that is impossible, because $f_{s_0}$ is strictly monotonic, a contradiction.
\end{proof}

\begin{lem}\label{lem:mon}
For a pair $C,G\in\cB$ of bounded metric spaces with $2d_{GH}(G,C)>\diam G>0$, the function $h(\l):=d_{GH}(G,\l C)$ is strictly monotonically increasing for $\l\ge1$.
\end{lem}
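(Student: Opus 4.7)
The plan is to reformulate $h(\l)$ through correspondences via Assertion~\ref{ass:GH-metri-and-relations} and track how the distortion of a fixed $R\in\cR(G,C)$ changes as $C$ is rescaled to $\mu C$. Note first that the hypothesis, combined with the second item of Assertion~\ref{ass:estim}, gives $\diam C\ge 2d_{GH}(G,C)>\diam G$, so $\diam C>\diam G>0$ automatically.

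For any $R\in\cR(G,C)$ and any $\mu>0$, let $D_\mu(R)$ denote the distortion $\dis R$ of $R$ viewed as an element of $\cR(G,\mu C)$, so that $2h(\mu)=\inf_R D_\mu(R)$. I would split
$$
D_\mu(R)=\max\bigl\{p_\mu(R),\,q_\mu(R)\bigr\},
$$
where $p_\mu(R)=\sup\bigl\{|gg'|-\mu|cc'|:(g,c),(g',c')\in R\bigr\}$ and $q_\mu(R)=\sup\bigl\{\mu|cc'|-|gg'|:(g,c),(g',c')\in R\bigr\}$. Two elementary bounds drive the proof. First, $p_\mu(R)\le\diam G$ because $|gg'|\le\diam G$ and $\mu|cc'|\ge 0$. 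Second, for $0<\mu<\mu'$, the pointwise identity
$$
\mu|cc'|-|gg'|=(\mu/\mu')\bigl(\mu'|cc'|-|gg'|\bigr)-(1-\mu/\mu')|gg'|
$$
together with $|gg'|\ge 0$ gives $\mu|cc'|-|gg'|\le(\mu/\mu')(\mu'|cc'|-|gg'|)$ pairwise, and hence $q_\mu(R)\le(\mu/\mu')\,q_{\mu'}(R)$.

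Now fix $\l_2>\l_1\ge 1$ and $\e>0$, and pick $R$ with $D_{\l_2}(R)<2h(\l_2)+\e$. The two bounds combine into
$$
2h(\l_1)\le D_{\l_1}(R)\le\max\bigl\{\diam G,\,(\l_1/\l_2)(2h(\l_2)+\e)\bigr\};
$$
letting $\e\to 0$ produces the key estimate
$$
2h(\l_1)\le\max\bigl\{\diam G,\,(\l_1/\l_2)\cdot 2h(\l_2)\bigr\}.\qquad(\ast)
$$

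It remains to verify $2h(\l)>\diam G$ for every $\l\ge 1$. Applying $(\ast)$ with $\l_1=1$ and $\l_2=\l$, the hypothesis $2h(1)>\diam G$ forces the second term of the maximum to dominate, so $h(\l)\ge\l\,h(1)$, and in particular $2h(\l)\ge\l\cdot 2h(1)>\diam G$ for every $\l\ge 1$. Then, for any $\l_2>\l_1\ge 1$, the inequality $2h(\l_1)>\diam G$ again forces the second argument of the maximum in $(\ast)$ to dominate, giving $h(\l_2)\ge(\l_2/\l_1)h(\l_1)>h(\l_1)$, where strictness uses $\l_2>\l_1$ and $h(\l_1)>0$ (which follows from $2h(\l_1)>\diam G>0$). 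The main obstacle is spotting the sub-linear estimate $q_\mu(R)\le(\mu/\mu')q_{\mu'}(R)$; once it is in hand, the rest is routine infimum-over-$R$ bookkeeping.
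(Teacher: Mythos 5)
Your argument is correct, and it reaches the conclusion by a route whose key estimate differs from the paper's, even though both proofs operate at the level of correspondences and both rest on the same underlying observation: once $\dis R>\diam G$, the distortion must be realized by pairs whose $C$-distance exceeds their $G$-distance, so rescaling $C$ upward pushes the distortion up by a definite amount. The paper quantifies this by extracting a maximizing sequence $\bigl((g_i,c_i),(g'_i,c'_i)\bigr)$ for $\dis R$, bounding $\liminf_i|c_ic'_i|\ge\frac12\diam G$, and deducing the additive estimate $\dis R_\l\ge\dis R+\frac{\l-1}{2}\diam G$, hence $h(\l)\ge h(1)+\frac{\l-1}{4}\diam G$; strict monotonicity on all of $[1,\infty)$ is then obtained by re-applying the lemma with $\l_1C$ in place of $C$. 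You instead split the distortion into the two one-sided suprema $p_\mu$ and $q_\mu$, use the trivial bound $p_\mu\le\diam G$ together with the scaling inequality $q_\mu(R)\le(\mu/\mu')\,q_{\mu'}(R)$ (which follows from your algebraic identity and $|gg'|\ge0$), and arrive at the multiplicative bound $h(\l_2)\ge(\l_2/\l_1)\,h(\l_1)$ whenever $2h(\l_1)>\diam G$. This avoids sequences and $\liminf$ bookkeeping altogether, and the resulting growth estimate $h(\l)\ge\l\,h(1)$ is in fact slightly sharper than the paper's additive one, since $h(1)>\frac12\diam G$. The concluding bootstrap --- first propagating $2h(\l)>\diam G$ from $\l=1$ to all $\l\ge1$, then comparing $h(\l_1)$ with $h(\l_2)$ --- plays the same role as the paper's substitution of $\l_1C$ for $C$. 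All individual steps (the identity behind the scaling inequality, the passage to the infimum over $R$, and the case analysis on the maximum in your estimate $(\ast)$) check out.
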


\begin{proof}
Due to assumptions, for any correspondence $R\in\cR(G,C)$ we have $\dis R>\diam G$. Consider a sequence $\bigl((g_i,c_i),(g'_i,c'_i)\bigr)\in R\x R$ such that $\bigl||c_ic'_i|-|g_ig'_i|\bigr|\to\dis R$. Since $\dis R>\diam G$, then the inequality $|c_ic'_i|>|g_ig'_i|$ is valid for sufficiently large $i$. Assume without loss of generality that it holds for all $i$.

By $R_\l$ we denote the correspondence from $\cR(G,\l C)$ that coincides with $R$ as a subset of $G\x C$. Then for $\l\ge1$ we have
$$
\dis R_\l\ge\l|c_ic'_i|-|g_ig'_i|=(\l-1)|c_ic'_i|+|c_ic'_i|-|g_ig'_i|,
$$
and therefore, passing to the limit as $i\to\infty$, we conclude that
$$
\dis R_\l\ge(\l-1)\liminf_{i\to\infty}|c_ic'_i|+\dis R.
$$
Since $\dis R>\diam G>0$, then $|c_ic'_i|>|g_ig'_i|+\frac12\diam G$ for large $i$, therefore, $\liminf_{i\to\infty}|c_ic'_i|\ge\frac12\,\diam G$, and hence, $\dis R_\l\ge\dis R+\frac{\l-1}2\,\diam G$. Since the latter inequality is valid for all correspondences $R\in G\x C$, we conclude that $d_{GH}(G,\l C)\ge d_{GH}(G,C)+\frac{\l-1}4\,\diam G$, and so $d_{GH}(G,\l C)>d_{GH}(G,C)$ for $\l>1$. In particular, each $\l C$, $\l\ge1$, taking instead of $C$, satisfies the conditions of the lemma. So, choosing arbitrary $\l_1,\,\l_2$, $1\le\l_1<\l_2$, and substituting to the inequality $\l_1 C$ and  $\l_2/\l_1$ instead of $C$ and $\l$, respectively, we get $d_{GH}(G,\l_2C)>d_{GH}(G,\l_1C)$.
\end{proof}

Let us now prove the path connectivity of spheres of a sufficiently large radius centered at an arbitrary $G\in\cB$.

\begin{thm}\label{thm:bounded}
For any bounded metric space $G\in\cB$ and any $r>\diam G$, the sphere $S_r(G)$ is path connected.
\end{thm}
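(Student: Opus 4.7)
The plan is to mimic the rescaling trick from Theorem~\ref{thm:1}: given $A,B\in S_r(G)$, I would join them by an auxiliary $\e$-shortest curve $\g$ in $\cB$, and then push each $\g(t)$ back to the sphere by a scalar factor $\l(t)$, so that the rescaled curve $\dl(t):=\l(t)\g(t)$ satisfies $d_{GH}\big(G,\dl(t)\big)\equiv r$. The case $G=\D_1$ is already covered by Theorem~\ref{thm:1}, so I assume $\diam G>0$. Then the bound $2r=2d_{GH}(G,X)\le\max\{\diam G,\diam X\}$ from Assertion~\ref{ass:estim}, combined with $r>\diam G$, forces $\diam X\ge 2r$ for every $X\in S_r(G)$, so both $A$ and $B$ have positive diameter. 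Picking $\e>0$ with $2\e<\min\{\diam A,\diam B\}$ and an $\e$-shortest curve $\g\:[a,b]\to\cB$ from $A$ to $B$, Lemma~\ref{lem:geo} guarantees that no $\g(t)$ equals $\D_1$, so $\diam\g(t)>0$, and by Lemma~\ref{lem:diam} together with compactness of $[a,b]$ one gets a uniform lower bound $m:=\min_t\diam\g(t)>0$.

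Next I would set up the rescaling equation. Let $F(\l,t):=d_{GH}\big(G,\l\g(t)\big)$; continuity of $F$ follows from continuity of the multiplication map $(\l,X)\mapsto\l X$ established inside the proof of Lemma~\ref{lem:scalar} and continuity of $d_{GH}$. For each fixed $t$ one has $F(0,t)=\tfrac12\diam G<r$, while Assertion~\ref{ass:estim} yields $2F(\l,t)\ge\l m-\diam G$ for large $\l$, giving a \emph{uniform} constant $\L$ with $F(\L,t)>r$ for all $t\in[a,b]$. The intermediate value theorem then supplies some $\l(t)\in(0,\L)$ with $F\big(\l(t),t\big)=r$. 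For uniqueness I would invoke Lemma~\ref{lem:mon}: if $0<\l_1<\l_2$ were two solutions, then $C:=\l_1\g(t)$ would satisfy $2d_{GH}(G,C)=2r>2\diam G>\diam G>0$, so applying Lemma~\ref{lem:mon} to the pair $G,C$ with the factor $\l_2/\l_1>1$ would give $d_{GH}(G,\l_2\g(t))>r$, a contradiction.

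To finish, I would show $\l(t)$ is continuous and assemble the curve. Since Lemma~\ref{lem:mon} only yields monotonicity on the ray $\mu\ge 1$, Lemma~\ref{lem:neyavnaya} does not apply off the shelf, but the compactness argument inside its proof carries over verbatim once uniqueness is in hand: at a putative discontinuity $t_0$ one extracts a subsequential limit $\l'\in[0,\L]$ of $\l(t_i)$, uses continuity of $F$ to obtain $F(\l',t_0)=r$, and concludes $\l'=\l(t_0)$ by uniqueness. Once $\l$ is continuous, Lemma~\ref{lem:scalar} makes $\dl(t):=\l(t)\g(t)$ a continuous curve in $\cB$, which lies in $S_r(G)$ by the very defining equation $F\big(\l(t),t\big)=r$; and the uniqueness of $\l$ at the endpoints forces $\l(a)=\l(b)=1$, so $\dl(a)=A$ and $\dl(b)=B$. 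The main obstacle I expect is precisely this mismatch between the half-line monotonicity supplied by Lemma~\ref{lem:mon} and the global strict monotonicity hypothesis of Lemma~\ref{lem:neyavnaya}: uniqueness must be extracted by hand and then threaded into an ad hoc continuity argument rather than being delivered by the ready-made implicit function lemma.
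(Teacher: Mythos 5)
Your proof is correct, but it takes a genuinely different route from the paper's. The paper first traps $S_r(G)$ between the spheres $S_{r/2}(\D_1)$ and $S_{3r/2}(\D_1)$ (Lemmas~\ref{lem:out} and~\ref{lem:in}), radially projects $A$ and $B$ onto $S_{r/2}(\D_1)$, connects the projections \emph{inside that sphere} by invoking Theorem~\ref{thm:1} as a black box, and then rescales the resulting constant-diameter curve by a factor $t\in[1,3]$; since every point of that auxiliary curve has diameter exactly $r>\diam G$, Lemma~\ref{lem:mon} applies on all of $[1,3]$ and Lemma~\ref{lem:neyavnaya} delivers the continuous selection $t(s)$ off the shelf. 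You instead keep the $\e$-shortest curve running directly from $A$ to $B$ (the device from the \emph{proof} of Theorem~\ref{thm:1}, via Lemma~\ref{lem:geo}) and rescale it pointwise, which obliges you to (i) extract a uniform upper bound $\L$ for the scaling factors from the positive minimum of $\diam\g(t)$, (ii) prove uniqueness of the root of $F(\cdot,t)=r$ by the ratio trick $C=\l_1\g(t)$, $\mu=\l_2/\l_1>1$ in Lemma~\ref{lem:mon}, and (iii) rerun the compactness argument of Lemma~\ref{lem:neyavnaya} by hand --- all of which you do correctly. (One pedantic point in step (iii): the subsequential limit $\l'$ could a priori equal $0$, where your uniqueness statement is silent; this is excluded because $F(0,t_0)=\tfrac12\diam G<r$, so $\l'$ is a positive root and uniqueness applies.) The paper's normalization buys a cleaner, plug-and-play use of the two ready-made lemmas at the price of a detour through $S_{r/2}(\D_1)$; your version avoids that detour, gets the sharper bound $\diam X\ge 2r$ on the sphere for free, and makes the endpoint identities $\l(a)=\l(b)=1$ immediate from uniqueness. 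Both arguments restrict to $\cM$ when $G$ is compact, so the corollary survives either way.
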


\begin{proof}
Since $\diam G<\infty$, then $S_r(G)\ss\cB$, see Assertion~\ref{ass:estim}. Since the case $\diam G=0$ has been considered in Theorem~\ref{thm:1}, assume that $\diam G>0$.

\begin{lem}\label{lem:out}
The ball $B_r(G)$ lies inside the ball $B_{3r/2}(\D_1)$.
\end{lem}

\begin{proof}
Indeed, if $X\in B_r(G)$, then according to the triangle inequality, we have
$$
d_{GH}(\D_1,X)\le d_{GH}(\D_1,G)+d_{GH}(G,X)\le\frac12\,\diam G+r<\frac{3r}2,
$$
and hence, $X\in B_{3r/2}(\D_1)$.
\end{proof}

\begin{lem}\label{lem:in}
The sphere $S_r(G)$ lies outside the ball $B_{r/2}(\D_1)$.
\end{lem}

\begin{proof}
Indeed, if $X\in S_r(G)$, then, due to the triangle inequality,
$$
d_{GH}(\D_1,X)\ge d_{GH}(X,G)-d_{GH}(G,\D_1)>r-r/2=r/2,
$$
and so, $d_{GH}(\D_1,X)>r/2$.
\end{proof}

Consider a pair of arbitrary points $A$ and $B$ lying in the sphere $S_r(G)$. Due to Lemma~\ref{lem:in}, $\diam A>r$ and $\diam B>r$, therefore, there exists $\l_A>0$ and $\l_B>0$ such that $\diam \l_A A=r$ and $\diam \l_B B=r$, i.e., the points $A'=\l_A A$ and $B'=\l_B B$ lie in the sphere $S_{r/2}(\D_1)$. Due to Theorem~\ref{thm:1}, there exists a continuous curve $\g\:[a,b]\to\cB$ lying in $S_{r/2}(\D_1)$ and such that $A'=\g(a)$ and $B'=\g(b)$. But then, according to Lemma~\ref{lem:scalar}, the points $A''=3A'$ and $B''=3B'$ can be connected by a continuous curve $3\g(s)$ lying in $S_{3r/2}(\D_1)$.

Since
$$
2d_{GH}\bigl(G,\g(s)\bigr)\ge\diam\g(s)-\diam G>r>\diam G,
$$
then Lemma~\ref{lem:mon} can be applied to the points $\g(s)$, and  according to this Lemma, the function  $f_s(t)=d_{GH}\bigl(G,t\,\g(s)\bigr)$, $t\in[1,3]$, is strictly monotonically increasing. Due to Lemmas~\ref{lem:out} and~\ref{lem:in}, for each $s$ there exists $t$ such that $f_s(t)=r$. Let us put $F(t,s)=d_{GH}\bigl(G,t\,\g(s)\bigr)$. Applying Lemma~\ref{lem:neyavnaya}, we obtain a continuous curve lying in $S_r(G)$ and connecting $A$ and $B$. Theorem is proved.
\end{proof}

If $G$ is a compact space, then one can consider a sphere $S_r(G)$ in the Gromov--Hausdorff space $\cM$. It is easy to see that in this case all the curves constructed in the proof of Theorem~\ref{thm:bounded} lie in $\cM$.

\begin{cor}
For each compact metric space $G\in\cM$ and each $r>\diam G$, the sphere $S_r(G)$ in $\cM$ is path connected.
\end{cor}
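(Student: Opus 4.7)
The plan is to run the proof of Theorem~\ref{thm:bounded} verbatim and to verify, step by step, that every space produced along the way is compact whenever the input spaces $G,A,B$ are compact. The only non-trivial issue is the initial use of Theorem~\ref{thm:1}, which relies on an $\e$-shortest curve and therefore a priori leaves $\cM$; this is exactly what the Corollary to Theorem~\ref{thm:1} takes care of.

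More concretely, fix $A,B\in S_r(G)\cap\cM$. First I would observe that compactness is preserved by the scaling operation $X\mapsto\l X$, so the rescaled spaces $A'=\l_A A$ and $B'=\l_B B$ lying in $S_{r/2}(\D_1)$ are again compact. Then, instead of citing Theorem~\ref{thm:1} in its general form, I would invoke its Corollary to obtain a continuous curve $\g\:[a,b]\to\cM$ with $\g(a)=A'$ and $\g(b)=B'$ whose image lies in $S_{r/2}(\D_1)\cap\cM$; this is legitimate because $A'$ and $B'$ are compact, so one may replace the $\e$-shortest curve in the original argument by a shortest curve in $\cM$ obtained from Assertion~\ref{ass:optimal-correspondence-exists}. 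Multiplication by $3$ gives a continuous curve $3\g$ in $S_{3r/2}(\D_1)\cap\cM$.

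For the last step, Lemma~\ref{lem:mon} applied to the compact space $G$ and each compact $\g(s)$ again yields strict monotonicity of $f_s(t)=d_{GH}(G,t\,\g(s))$ on $t\in[1,3]$; Lemmas~\ref{lem:out} and~\ref{lem:in} give values on the two ends straddling $r$; and Lemma~\ref{lem:neyavnaya} produces a continuous function $t(s)$ with $d_{GH}\bigl(G,t(s)\g(s)\bigr)=r$. The resulting curve $s\mapsto t(s)\g(s)$ connects $A$ to $B$ inside $S_r(G)$, and since $\g(s)\in\cM$ and compactness is preserved under scaling by the positive real $t(s)$, every space on this curve is compact. Thus the curve lies in $S_r(G)\cap\cM$, which establishes the corollary. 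I do not anticipate any genuine obstacle: the whole content is that each construction used in the proof of Theorem~\ref{thm:bounded} is ``compactness-preserving'', with the single substantive observation being the replacement of the $\e$-shortest curve by a geodesic provided by Assertion~\ref{ass:optimal-correspondence-exists}.
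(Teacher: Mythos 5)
Your proposal is correct and follows the paper's argument: the paper itself disposes of this corollary by remarking that all curves constructed in the proof of Theorem~\ref{thm:bounded} stay in $\cM$ when $G$, $A$, $B$ are compact, and your write-up simply makes explicit the two points the paper leaves implicit (replacing the $\e$-shortest curve by the geodesic from the Corollary to Theorem~\ref{thm:1}, and noting that scaling by a positive real preserves compactness). No gaps.
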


\subsection{Generic Metric Spaces}
Let $X$ be a metric space, $\#X\ge3$. By $S(X)$ we denote the set of all bijective mappings of the set $X$ onto itself, and let $\id\in S(X)$ be the identical bijection. Put
\begin{align*}
s(X)&=\inf\bigl\{|xx'|:x\ne x',\ x,x'\in X\bigr\},\\
t(X)&=\inf\bigl\{|xx'|+|x'x''|-|xx''|:x\ne x'\ne x''\ne x\bigr\},\\
e(X)&=\inf\bigl\{\dis f:f\in S(X),\ f\ne\id\bigr\}.
\end{align*}

We call a metric space $M\in\GH$, $\#M\ge3$, \emph{generic\/} if all the three its characteristics $s(M)$, $t(M)$, and $e(M)$ are positive.

The following generalization of results from~\cite{ITFiniteLoc} and~\cite{Filin} was obtained by A.~Filin in his diploma.

\begin{ass}\label{ass:diz}
Let $M,X\in\GH$ be metric spaces such that $\#M\ge3$ and $r:=d_{GH}(M,X)<s(M)/2$. Then for any real $d$ such that $r<d\le s(M)/2$, the following statements are valid.
\begin{enumerate}
\item\label{ass:diz:1} There exists a correspondence $R\in\cR(M,X)$ such that $\dis R<2d\le s(M)$.
\item\label{ass:diz:2} For each such $R$, the family $D_R=\bigl\{X_i:=R(i)\bigr\}_{i\in M}$ is a partition of the space $X$.
\item\label{ass:diz:3} For any $i,j\in M$, probably coinciding, and for any $x_i\in X_i$ and $x_j\in X_j$, the estimate $\bigl||x_ix_j|-|ij|\bigr|<2d\le s(M)$ holds.
\item\label{ass:diz:4} For all $i\in M$, the inequality $\diam X_i<2d\le s(M)$ holds.
\item\label{ass:diz:5} If $d\le s(M)/4$, then the partition $D_R$ is uniquely defined, i.e., if $R'\in\cR(M,X)$ is such that $\dis R'<2d$, then $D_{R'}=D_R$.
\item\label{ass:diz:6} If $d\le\min\bigl\{s(M)/4,\,e(M)/4\bigr\}$, then the correspondence $R\in\cR(M,X)$ such that  $\dis R<2d$ is uniquely defined, and hence, $R$ is an optimal correspondence. In this case, $\dis R=2d_{GH}(M,X)$, $\diam X_i\le\dis R$, and $\bigl||x_ix'_j|-|ij|\bigr|\le\dis R$ for all $i,j\in M$, $i\ne j$.
\end{enumerate}
\end{ass}

\begin{proof}
(\ref{ass:diz:1}) Since
$$
d_{GH}(M,X)=\frac12\inf\bigl\{\dis R:R\in\cR(M,X)\bigr\}<d,
$$
then there exists a correspondence $R\in\cR(M,X)$ such that $\dis R<2d\le s(M)$.

(\ref{ass:diz:2}) If for some $x\in X$ one has $\#R^{-1}(x)>1$, then $\dis R\ge\diam R^{-1}(x)\ge s(M)$, a contradiction. Therefore, the family $\big\{X_i:=R(i)\big\}_{i\in M}$ is a partition of the space $X$.

(\ref{ass:diz:3}) Since $\dis R<2d$, then for any $i,j\in M$ and any $x_i\in R(i)$, $x_j\in R(j)$ we have:
$$
\bigl||x_ix_j|-|ij|\bigr|\le\dis R<2d\le s(M).
$$

(\ref{ass:diz:4}) Since $\dis R<2d$, then for any $i\in M$ we have $\diam X_i\le\dis R<2d\le s(M)$.

(\ref{ass:diz:5}) If $d\le s(M)/4$, and $R'\in\cR(M,X)$ is another correspondence such that $\dis R'<2d$, then $\big\{X'_i:=R'(i)\big\}_{i\in M}$ is also a partition of $X$ satisfying the properties already proved above. If $D_R\ne D_{R'}$, then either there exists an $X'_i$ intersecting simultaneously some different $X_j$ and $X_k$, or there exists an  $X_i$ intersecting simultaneously some different $X'_j$ and $X'_k$. Indeed, if there is no such an $X'_i$, then each $X'_i$ is contained in some $X_j$, and hence, $\{X'_p\}$ is a sub-partition of $\{X_q\}$. Since these partitions are different, then some element $X_i$ contains some different  $X'_j$ and $X'_k$.

Thus, assume without loss of generality that $X'_i$ intersects some different $X_j$ and $X_k$ simultaneously. Choose arbitrary $x_j\in X_j\cap X'_i$ and $x_k\in X_k\cap X'_i$. Then
$$
2d>\diam X'_i\ge|x_jx_k|>|jk|-2d\ge s(M)-2d,
$$
and hence, $d>s(M)/4$, a contradiction.

(\ref{ass:diz:6}) Let $R'\in\cR(M,X)$ satisfy $\dis R'<2d$ as in previous Item. Then $D_R=D_{R'}$, as it is already proved above. It remains to verify that $X_i=X'_i$ for each $i\in M$. Assume to the contrary that there exists a non-trivial bijection $f\:M\to M$ such that $X_i=X'_{f(i)}$. Due to assumptions, $\dis f\ge e(M)\ge4d$. The latter means that there exist $i,j\in M$ such that
$$
\Bigl||ij|-\bigl|f(i)f(j)\bigr|\Bigr|\ge4d.
$$
Let us put $p=f(i)$, $q=f(j)$, and choose arbitrary $x_i\in X_i=X'_p$ and $x_j\in X_j=X'_q$. Then
$$
4d\le\bigl||pq|-|ij|\bigr|=\bigl||pq|-|x_ix_j|+|x_ix_j|-|ij|\bigr|\le\bigl||pq|-|x_ix_j|\bigr|+\bigl||x_ix_j|-|ij|\bigr|<2d+2d,
$$
a contradiction. Thus, a correspondence $R$ whose distortion is close enough to $\frac12d_{GH}(M,X)$ is uniquely defined. So, $R$ is an optimal correspondence.
\end{proof}

\begin{lem}\label{lem:M_sigma}
Let $M$ be a metric space, and $s(M)>0$. For each pair $\{i,j\}\ss M$, fix a real number $a_{ij}=a_{ji}$ in such a way that $a_{ii}=0$ and $|a_{ij}|<s(M)$. Change the distance function on $M$ by adding the value $a_{ij}$ to each distance $|ij|$ and denote by $\r$  the resulting function. Put $a=\sup|a_{ij}|$. If $a\le t(M)/3$, then the following statements are valid.
\begin{enumerate}
\item\label{lem:M_sigma:1} The distance function $\r$ is a metric on the set $M$.
\item\label{lem:M_sigma:2} For the resulting metric space $M_\r=(M,\r)$, the inequality $d_{GH}(M,M_\r)\le a/2$ holds.
\item\label{lem:M_sigma:3} If in addition $a/2<\min\bigl\{s(M)/4,\,e(M)/4\bigr\}$, then $d_{GH}(M,M_\r)=a/2$.
\item\label{lem:M_sigma:4} Under assumptions of Item~$(\ref{lem:M_sigma:3})$, if the equality  $a_{ij}=\pm a$ holds for all $i\ne j$, then $M_\r$ lies in the sphere $S_{a/2}(M)$\rom; moreover, if $M_{\r'}$ is constructed in a similar way by the set $a'_{ij}=\pm a$, $i\ne j$, and $\r_t=(1-t)\r+t\,\r'$, $t\in[0,1]$, then the set $M$ with the distance function $\r_t$ is a metric space, the mapping $t\mapsto M^t:=(M,\r_t)$ is a continuous curve in $\GH$, and, if for some pair $\{i,j\}$, $i\ne j$, the equality $a_{ij}=a'_{ij}$ holds, then all the spaces $M^t$ lie in the sphere  $S_{a/2}(M)$.
\end{enumerate}
\end{lem}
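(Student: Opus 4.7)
The plan is to handle the four items in order, each building on the previous.

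For (\ref{lem:M_sigma:1}), I would verify positivity directly from $|a_{ij}|<s(M)\le|ij|$ for $i\ne j$, and verify the triangle inequality by noting that on any triple of pairwise distinct points the required inequality $|ik|_\r\le|ij|_\r+|jk|_\r$ reduces to
$$
a_{ik}-a_{ij}-a_{jk}\le|ij|+|jk|-|ik|,
$$
whose right side is at least $t(M)$ by definition of $t(M)$ and whose left side is at most $3a\le t(M)$ by hypothesis. The cases where two of the three indices coincide are immediate from $a_{ii}=0$ and non-negativity of the $|ij|_\r$.

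For (\ref{lem:M_sigma:2}), I would test the identity correspondence $R=\{(i,i):i\in M\}\in\cR(M,M_\r)$: its distortion equals $\sup_{i\ne j}\bigl||ij|-|ij|_\r\bigr|=\sup_{i\ne j}|a_{ij}|=a$, so Assertion~\ref{ass:GH-metri-and-relations} yields $d_{GH}(M,M_\r)\le a/2$. For (\ref{lem:M_sigma:3}), to obtain the matching lower bound I would invoke Assertion~\ref{ass:diz}(\ref{ass:diz:6}) with $d:=\min\bigl\{s(M)/4,e(M)/4\bigr\}$. The hypothesis $a/2<d$, combined with the upper bound $d_{GH}(M,M_\r)\le a/2<d$ just established, makes Assertion~\ref{ass:diz}(\ref{ass:diz:6}) applicable, so any correspondence in $\cR(M,M_\r)$ of distortion strictly less than $2d$ is the unique optimal one. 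The identity correspondence has distortion $a<2d$, hence must coincide with that optimal correspondence, giving $2d_{GH}(M,M_\r)=a$.

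For (\ref{lem:M_sigma:4}), I would first record that $|ij|_{\r_t}=|ij|+a^t_{ij}$ with $a^t_{ij}:=(1-t)a_{ij}+t\,a'_{ij}$, so $|a^t_{ij}|\le a$ throughout; then (\ref{lem:M_sigma:1}), already proved, shows each $\r_t$ is a metric on $M$. Continuity of $t\mapsto M^t$ in $\GH$ would follow by applying Assertion~\ref{ass:GH-metri-and-relations} to the identity correspondence between $M^{t_1}$ and $M^{t_2}$, whose distortion equals $\sup_{i\ne j}|a^{t_1}_{ij}-a^{t_2}_{ij}|\le 2a|t_1-t_2|$. For the final clause, if some pair $i_0\ne j_0$ satisfies $a_{i_0j_0}=a'_{i_0j_0}$, then $|a^t_{i_0j_0}|=|a_{i_0j_0}|=a$ for every $t$, so $\sup_{i\ne j}|a^t_{ij}|=a$; applying (\ref{lem:M_sigma:3}) to the perturbation family $\{a^t_{ij}\}$ yields $d_{GH}(M,M^t)=a/2$, hence $M^t\in S_{a/2}(M)$. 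The statement $M_\r\in S_{a/2}(M)$ is the special case $\r'=\r$.

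I do not anticipate a serious obstacle; the one delicate point is the bookkeeping in (\ref{lem:M_sigma:3}), where the identity correspondence must be certified as a competitor with distortion strictly less than $2d$ in order for Assertion~\ref{ass:diz}(\ref{ass:diz:6}) to force both uniqueness and optimality at once. The remaining steps are a direct unwinding of the definitions of $\r$, $\r_t$, and the Gromov--Hausdorff distance.
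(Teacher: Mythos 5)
Your proposal is correct and follows essentially the same route as the paper: the triangle inequality via $t(M)-3a\ge0$, the identity correspondence for the upper bound, Assertion~\ref{ass:diz}(\ref{ass:diz:6}) to certify that identity correspondence as the unique optimal one, and the identity correspondence between $M^{t_1}$ and $M^{t_2}$ for continuity. The only (harmless) deviations are that you reuse Item~(\ref{lem:M_sigma:1}) instead of Proposition~\ref{prop:metric} to see that $\r_t$ is a metric, and you invoke Item~(\ref{lem:M_sigma:3}) for the family $\{a^t_{ij}\}$ rather than re-running the optimality argument.
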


\begin{proof}
(\ref{lem:M_sigma:1}) It is sufficient to verify that the distance function $\r$ satisfies the triangle inequalities. Choose arbitrary three points $i,j,k\in M$, then
$$
\r(i,j)+\r(j,k)-\r(i,k)=|ij|+a_{ij}+|jk|+a_{jk}-|ik|-a_{ik}\ge t(M)-3a\ge0.
$$

(\ref{lem:M_sigma:2}) Let $R\in\cR(M,M_\r)$ be the identical mapping, then
$$
2d_{GH}(M,M_\r)\le\dis R=\sup\Bigl\{\bigl||ij|-|ij|-a_{ij}\bigr|:i,j\in M\Bigr\}=\sup\bigl\{|a_{ij}|:i,j\in M\bigr\}=a.
$$

(\ref{lem:M_sigma:3}) The assumptions imply that Assertion~\ref{ass:diz}, Item~(\ref{ass:diz:6}), can be applied, and hence, the identical mapping $R$ is an optimal correspondence. The latter implies the equality required.

(\ref{lem:M_sigma:4}) Proposition~\ref{prop:metric} implies that $M^t$ is a metric space. Further, let  $R^t\in\cR(M,M^t)$ be the identical mapping, then $\dis R^t\le a$, and so $R^t$ is an optimal correspondence. Let $R'\in\cR(M^t,M^s)$ be the identical mapping too. Then
$$
2d_{GH}(M^t,M^s)\le\dis R'=|t-s|\,\sup_{i,j\in M}|a_{ij}-a'_{ij}|\le 2a|t-s|,
$$
therefore, $M^t$ is a continuous curve in $\GH$. It remains to notice that if $a_{ij}=a'_{ij}$ for some $i\ne j$, then
$$
\bigl|\r_t(i,j)-|ij|\bigr|=\bigl|(1-t)a_{ij}+t\,a'_{ij}\bigr|=a,
$$
so $\dis R^t=a$ for all $t$, and hence, $d_{GH}(M,M^t)=a/2$.
\end{proof}

\subsection{Path Connectivity of Small Spheres Centered at Generic Spaces}

We need the following notations. Let $A$ and $B$ be non-empty subsets of a metric space $X$. We put
$$
|AB|=\inf\bigl\{|ab|:a\in A,\,b\in B\bigr\},\qquad  |AB|'=\sup\bigl\{|ab|:a\in A,\,b\in B\bigr\}.
$$
If we need to emphasis that these values are calculated with respect to a metric $\r$, then we write $|AB|_\r$ and $|AB|'_\r$ instead of $|AB|$ and $|AB|'$, respectively. Similarly, we sometimes write $\diam_\r A$ instead of $\diam A$ by the same reason.

\begin{thm}\label{thm:small}
Let $M$ be a generic metric space and
$$
0<r<\min\bigl\{s(M)/4,\,e(M)/4,\,t(M)/6\bigr\}.
$$
Then the sphere $S_r(M)$ in $\GH$ is path connected.
\end{thm}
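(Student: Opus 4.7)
The plan is to connect any $X \in S_r(M)$ to a canonical ``sign-pattern'' space $M_\sigma$ from Lemma~\ref{lem:M_sigma} via two explicit one-parameter deformations, and then to connect any two canonical spaces using Lemma~\ref{lem:M_sigma}(\ref{lem:M_sigma:4}). Since $r<\min\{s(M)/4,e(M)/4\}$, Assertion~\ref{ass:diz}(\ref{ass:diz:6}) gives each $X\in S_r(M)$ a unique optimal correspondence $R\in\cR(M,X)$ with $\dis R=2r$ and a well-defined partition $\{X_i\}_{i\in M}$ with $\diam X_i\le 2r$ and $\bigl||x_i x_j|_X-|ij|_M\bigr|\le 2r$. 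Writing $b_{ij}(x,y)=|xy|_X-|ij|_M$, I would assign each pair $\{i,j\}$ a sign $\sigma_{ij}\in\{\pm 1\}$ matching the direction of the supremum of $|b_{ij}|$, so that if $E_0:=\sup_{i\ne j, x,y}|b_{ij}|$ equals $2r$ then it is approached in the direction of $+2r\sigma_{ij}$.

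The first stage deforms only the inter-cluster distances:
\[
d_u(x,y) = \begin{cases} |xy|_X, & x,y\in X_i,\\ (1-u)|xy|_X + u\bigl(|ij|_M+2r\sigma_{ij}\bigr), & x\in X_i,\,y\in X_j,\,i\ne j,\end{cases}
\]
for $u\in[0,1]$. The only nontrivial triangle inequality is for $x,y,z$ in three different clusters, and there it reduces to $|ij|+|jk|-|ik|+2r(\sigma_{ij}+\sigma_{jk}-\sigma_{ik})\ge t(M)-6r>0$ by $r<t(M)/6$. The second stage shrinks each cluster to a point while holding the new inter-cluster distances fixed:
\[
d^*_v(x,y) = \begin{cases} (1-v)|xy|_X, & x,y\in X_i,\\ |ij|_M+2r\sigma_{ij}, & x\in X_i,\,y\in X_j,\,i\ne j,\end{cases}
\]
for $v\in[0,1]$, with the same triangle-inequality check. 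At $v=1$ the pseudometric passes to the canonical metric space $M_\sigma$ of Lemma~\ref{lem:M_sigma}(\ref{lem:M_sigma:3}).

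Next I would verify that both curves actually lie in $S_r(M)$. For the natural correspondence $R$ (sending $i\to X_i$) the distortion is $\max(D,E)$ where $D$ is the sup of intra-cluster diameters and $E$ is the sup of the inter-cluster terms; each term is bounded by $(1-u)|b_{ij}|+2ru\le 2r$ and $(1-v)\diam X_i\le 2r$, so $\dis R\le 2r$. The matching lower bound splits into two cases, depending on whether the original $\dis R_X=2r$ was attained by intra-cluster pairs ($D=2r$, in which case $D$ is preserved in $d_u$ and the inter-cluster contribution is constantly $2r$ in $d^*_v$) or by inter-cluster pairs ($E_0=2r$, in which case the sign-matching of $\sigma$ forces the inter-cluster contribution to remain $2r$ throughout both stages). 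To preclude a competing $R'$ with $\dis R'<2r$: every intra-cluster distance is $\le 2r$ while every inter-cluster distance is $\ge |ij|_M-2r\ge s(M)-2r>2r$, so $R'$ must respect the partition via some bijection $\pi\in S(M)$; a non-identity $\pi$ has $\dis\pi\ge e(M)>4r$, which pushes $\dis R'>2r$, while $\pi=\id$ simply reproduces $R$ with $\dis R=2r$. Hence both curves stay in $S_r(M)$ and are continuous (in fact $4r$-Lipschitz) in $\GH$.

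Finally, to connect two sign-pattern spaces $M_\sigma,M_\tau$: Lemma~\ref{lem:M_sigma}(\ref{lem:M_sigma:4}) supplies a direct linear interpolation inside $S_r(M)$ whenever $\sigma,\tau$ agree on at least one pair. When $\tau=-\sigma$, the hypothesis $\#M\ge 3$ yields at least three pairs, so I can choose an intermediate sign pattern $\sigma'$ agreeing with $\sigma$ on one pair and with $\tau$ on a different one, and route $M_\sigma\to M_{\sigma'}\to M_\tau$ via two applications of the lemma. Concatenating produces a path in $S_r(M)$ between any $X$ and $Y$. The main obstacle will be the ``$\dis R=2r$, not merely $\le 2r$'' verification on every intermediate metric: this is why both the case-split on whether $D$ or $E_0$ equals $2r$ and the careful sign assignment to $\sigma$ are needed, and why all three generic-ness bounds $r<s(M)/4$, $r<e(M)/4$, $r<t(M)/6$ are used simultaneously.
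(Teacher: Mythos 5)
Your proposal is correct and follows essentially the same route as the paper's proof: extract the unique optimal correspondence and partition via Assertion~\ref{ass:diz}, Item~(\ref{ass:diz:6}), linearly push the inter-cluster distances to $|ij|\pm 2r$ while verifying that the natural correspondence keeps distortion exactly $2r$, collapse the clusters, and join the resulting sign-pattern spaces through an intermediate mixed pattern using Lemma~\ref{lem:M_sigma}, Item~(\ref{lem:M_sigma:4}), and $\#M\ge3$. The only cosmetic difference is that you carry a per-pair sign $\sigma_{ij}$ with a two-way case split (diameters vs.\ inter-cluster terms), where the paper fixes a uniform sign via a three-way split on the distortion formula; just note that the two-points-in-one-cluster triangle inequality also requires the (easy) check using $\diam X_i\le 2r<|ij|-2r$, which the paper spells out.
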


\begin{proof}
Choose an arbitrary $X\in S_r(M)$. Since the conditions of Assertion~\ref{ass:diz}, Item~(\ref{ass:diz:6}), are valid, then there exists a unique optimal correspondence $R\in\cR(M,X)$. Moreover, $\dis R=2r$ and, besides, the correspondence $R$ defines the partition $D_R=\big\{X_i:=R(i)\big\}_{i\in M}$ of the space $X$ into subsets  $X_i$, whose diameters are at mots $2r$ and such that for any $x_i\in X_i$ and $x_j\in X_j$, the inequality  $\bigl||x_ix_j|-|ij|\bigr|\le2r$ holds.

It is easy to see that the distortion of the correspondence $R$ can be calculated by the following formula:
$$
\dis R=\max\Bigl[\sup_i\{\diam X_i\},\,\sup_{i\ne j}\bigl\{|X_iX_j|'-|ij|\bigr\},\,\sup_{i\ne j}\bigl\{|ij|-|X_iX_j|\bigr\}\Bigr].
$$
We consider three possibilities depending on which of the three items the maximum attains at.

{\bf (1)} Let $\dis R=\sup_i\{\diam X_i\}$. Define a distance function $\r$ on the set $X$ as follows: the distances between the points in each $X_i$ do not change, and for $x_i\in X_i$ and $x_j\in X_j$, $i\ne j$, put $\r(x_i,x_j)=|ij|+2r$. Let us show that $\r$ is a metric. To do that, it suffices to verify the triangle inequalities for the points $x_i,x'_i\in X_i$ and $x_j\in X_j$, $j\ne i$. Since the triangle $x_ix'_ix_j$ is equilateral, it suffices to show that $\r(x_i,x'_i)\le2\r(x_i,x_j)$. We have
$$
2\r(x_i,x_j)-\r(x_i,x'_i)=2|ij|+4r-|x_ix'_i|=\bigl(2|ij|-|x_ix'_i|\bigr)+4r>0.
$$
Notice that $|X_iX_j|_\r=|X_iX_j|'_\r=|ij|+2r$ for any $i\ne j$.

Define the functions $\r_t$, $t\in[0,1]$, that coincide with the initial metric on all $X_i$ and equal $\r_t(x_i,x_j)=(1-t)|x_ix_j|+t\bigl(|ij|+2r\bigr)$ for all $x_i\in X_i$ and $x_j\in X_j$, $i\ne j$. All the functions $\r_t$ are metrics according to Proposition~\ref{prop:metric}. By $X^t$ we denote the metric space $(X,\r_t)$. If  $R'\in\cR(X^t,X^s)$ is the identical mapping, then
$$
\dis R'=|t-s|\sup\Bigl\{\bigl||x_ix_j|-|ij|-2r\bigr|:i,j\in M,\,i\ne j\Bigr\}\le 4r\,|t-s|,
$$
and so $X^t$ is a continuous curve in $\GH$.

Further, let us show that all the spaces $X^t$ lie in the sphere $S_r(M)$. Since the metric $\r_t$ coincides with the initial one at each $X_i$, then $\diam_{\r_t}X_i=\diam X_i$. Further, for $i\ne j$, the function $\r_t(x_i,x_j)=|x_ix_j|+t\bigl(|ij|+2r-|x_ix_j|\bigr)$ increases monotonically as $t$ increases, and
$$
|x_ix_j|\le\r_t(x_i,x_j)\le|ij|+2r.
$$
Therefore,
$$
-2r\le|X_iX_j|'-|ij|\le|X_iX_j|'_{\r_t}-|ij|\le2r\quad \text{and}\quad 2r\ge|ij|-|X_iX_j|\ge|ij|-|X_iX_j|_{\r_t}\ge-2r.
$$
So, we conclude that
$$
\sup_{i\ne j}\bigl\{|X_iX_j|'_{\r_t}-|ij|\bigr\}\le2r\quad \text{and}\quad  \sup_{i\ne j}\bigl\{|ij|-|X_iX_j|_{\r_t}\bigr\}\le2r.
$$
But $\sup_i\{\diam_{\r_t} X_i\}=\sup_i\{\diam X_i\}=2r$, therefore, if we take the correspondence  $R^t\in\cR(M,X^t)$ coinciding with $R$, then
$$
\dis R^t=\max\Bigl[\sup_i\{\diam_{\r_t}X_i\},\,\sup_{i\ne j}\bigl\{|X_iX_j|'_{\r_t}-|ij|\bigr\},\,\sup_{i\ne j}\bigl\{|ij|-|X_iX_j|_{\r_t}\bigr\}\Bigr]=2r.
$$
So, each space $X^t$ satisfies the conditions of Assertion~\ref{ass:diz}, Item~(\ref{ass:diz:6}), and hence, $R^t$ is an optimal correspondence. Thus, $2d_{GH}(M,X^t)=\dis R^t=2r$, i.e., the curve $X^t$ lies in $S_r(M)$.

Now let us deform the metric on the space $X^1$ as follows: for points $x,\,x'$ lying in the same $X_i$, we put $\nu_t(x,x')=(1-t)|xx'|$, $t\in[0,1]$, and for points lying in different sets $X_i$, we do not change the metric. It is easy to see that all $\nu_t$ are metrics, and for the metric space $X^1_t=(X,\nu_t)$, the next relations are valid:
\begin{gather*}
\diam_{\nu_t}X_i=(1-t)\,\diam X_i\le2r,\\
|X_iX_j|'_{\nu_t}-|ij|=|X_iX_j|'_\r-|ij|=2r,\\
|ij|-|X_iX_j|_{\nu_t}=|ij|-|X_iX_j|_\r=-2r.
\end{gather*}
Consider the correspondence $R$ as a correspondence between $M$ and $X^1_t$, and denote it by   $R^t_1\in\cR(M,X^1_t)$. Then $\dis R^t_1=2r$, therefore, $R^t_1$ is an optimal correspondence, and $d_{GH}(M,X^1_t)=r$, i.e., all the spaces $X^1_t$ lie in the sphere $S_r(M)$. The same arguments as above show that the mapping $t\mapsto X^1_t$ is a continuous curve in $\GH$. The space $X^1_1$ is obtained from $M$ by adding the value $2r$ to all the distances $|ij|$, $i\ne j$. This space is denoted by $M^+$.

Thus, we have constructed a continuous curve in $S_r(M)\subset\GH$ connecting the space $X\in S_r(M)$ with the space $M^+\in S_r(M)$.

{\bf (2)} Let $\dis R=\sup_{i\ne j}\bigl\{|X_iX_j|'-|ij|\bigr\}$. Consider the same family $\r_t$ of metrics as in Case~(1). Since
$$
|X_iX_j|'-|ij|\le |X_iX_j|'_{\r_t}-|ij|\le2r,
$$
and $\sup_{i\ne j}\bigl\{|X_iX_j|'-|ij|\bigr\}=2r$, then $\sup_{i\ne j}\bigl\{|X_iX_j|'_{\r_t}-|ij|\bigr\}=2r$ for all $t$. As it is shown in Case~(1), $\diam_{\r_t}X_i=\diam X_i\le 2r$ and $\big||ij|-|X_iX_j|_{\r_t}\big|\le2r$, therefore, the family $X^t=(X,\r_t)$ lies in the sphere $S_r(M)$. The arguments from Case~(1) prove the continuity of the curve  $t\mapsto X^t$, whose ending point can be connected with the space $M^+\in S_r(M)$ by means of the same construction. Thus, again we have constructed a continuous curve in $S_r(M)\subset\GH$ connecting the space $X\in S_r(M)$ with the space $M^+\in S_r(M)$.

{\bf (3)} Let $\dis R=\sup\bigl\{|ij|-|X_iX_j|:i\ne j\bigr\}$. In this case, we define the metric $\r$ in a different way, namely, we put $\r(x_i,x_j)=|ij|-2r$ for $i\ne j$, and $\r_t(x_i,x_j)=(1-t)|x_ix_j|+t\bigl(|ij|-2r\bigr)$. Let us show that $\r$ is a metric too. The value $|ij|-2r$ is positive, because $r<s(M)/4$, and hence $|ij|-2r>|ij|-s(M)/2\ge s(M)/2>0$. It remains to verify the triangle inequalities. If all three points belong to the same $X_i$, then the inequality coincides with the one in $X$, and hence it holds. Further, if $x_i,x'_i\in X_i$ and $x_j\in X_j$, $j\ne i$, then the triangle $x_ix'_ix_j$ is equilateral, and it suffices to show that $\r(x_i,x'_i)\le2\r(x_i,x_j)$. Since $r<s(M)/4$, we have
$$
2\r(x_i,x_j)-\r(x_i,x'_i)=2|ij|-4r-|x_ix'_i|>2|ij|-s(M)-|x_ix'_i|=\big(|ij|-s(M)\big)+\big(|ij|-|x_ix'_i|\big).
$$
It remains to notice that $|ij|-s(M)\ge 0$ by definition of $s(M)$, and $|ij|-|x_ix'_i|>2r$ because $\diam X_i\le 2r$ and $|ij|\ge s(M)>4r$. At last, if $x_i\in X_i$, $x_j\in X_j$, and $x_k\in X_k$ for pairwise distinct $i,\,j,\,k$, then
$$
\r(x_i,x_j)+\r(x_j,x_k)-\r(x_i,x_k)=|ij|+|jk|-|ik|-2r\ge t(M)-2r>6r-2r=4r>0.
$$
Thus, $\r$ is a metric. Notice that $|X_iX_j|_\r=|X_iX_j|'_\r=|ij|-2r$ for any $i\ne j$.

In this case the function $\r_t(x_i,x_j)=|x_ix_j|-t\bigl(2r+|x_ix_j|-|ij|\bigr)$ decreases monotonically as $t$ increases. Since
$$
|ij|-|X_iX_j|\le|ij|-|X_iX_j|'_{\r_t}\le2r,
$$
and $\sup_{i\ne j}\bigl\{|ij|-|X_iX_j|\bigr\}=2r$, then $\sup_{i\ne j}\bigl\{|ij|-|X_iX_j|_{\r_t}\bigr\}=2r$ for all $t$. Reasonings similar to Cases~(1) and~(2) show that $\diam_{\r_t}X_i=\diam X_i\le 2r$ and $\big||X_iX_j|'_{\r_t}-|ij|\big|\le2r$, therefore, the family $X^t=(X,\r_t)$ forms a continuous curve lying in the sphere $S_r(M)$. Further, the arguments  similar to the ones given in the previous cases allow us to construct a continuous curve in $\GH$ connecting the space $X\in S_r(M)$ with the space $M^-\in S_r(M)$ by a curve lying in the sphere $S_r(M)$. Here the space $M^-$ is obtained from the space $M$ by subtracting the value $2r$ from all the distances $|ij|$, $i\ne j$.

To conclude the proof, we apply Lemma~\ref{lem:M_sigma}, Item~(\ref{lem:M_sigma:4}), and construct continuous curves in $S_r$ connecting the spaces $M^+$ and $M^-$ with the space $M^{\pm}$, where some nonzero distances have the form $|ij|+2r$, $i\ne j$, and the remaining ones have the form $|ij|-2r$, $i\ne j$. That can be done since $\#M\ge3$ according to the assumptions. Theorem is proved.
\end{proof}

If the space $M$ is finite, and $X$ is a compact metric space, then it is easy to see that all the curves constructed in the proof of Theorem~\ref{thm:small} lies in $\cM$. Thus, the following result holds.

\begin{cor}
Let $M$ be a finite generic space, and
$$
0<r<\min\bigl\{s(M)/4,\,e(M)/4,\,t(M)/6\bigr\}.
$$
Then the sphere $S_r(M)$ in $\cM$ is path connected.
\end{cor}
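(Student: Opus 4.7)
The plan is to show that when $M$ is finite and we restrict attention to $X\in S_r(M)\cap\cM$, every curve constructed in the proof of Theorem~\ref{thm:small} stays inside $\cM$. The overall structure of the argument is then inherited verbatim from that proof, and only the compactness of the intermediate spaces needs to be verified.

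First I would reproduce the setup in the compact regime. Fix $X\in S_r(M)\cap\cM$. By Assertion~\ref{ass:diz}(\ref{ass:diz:6}) the optimal correspondence $R\in\cR(M,X)$ is unique, and this uniqueness forces $R$ to be closed: indeed, the closure $\bar R$ is again a correspondence (the projections remain surjective), and $\dis\bar R=\dis R$ by continuity of the distance function, so $\bar R=R$ by uniqueness. Since $M$ is finite, each singleton $\{i\}$ is clopen in $M$, so each slice $X_i=R(i)$ is closed in the compact space $X$ and is therefore itself compact. Thus the partition $D_R=\{X_i\}_{i\in M}$ is a finite partition of $X$ into compact pieces.

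Next, in each of the three cases of the proof of Theorem~\ref{thm:small}, one builds a family of metrics $\r_t$, and later $\nu_t$, on the underlying set $X$ that coincides with the original metric on every $X_i$ and modifies distances only between distinct $X_i$ and $X_j$. The key observation I would use is that any sequence $(y_n)\subset X$ contains, by pigeonhole applied to the finite partition $D_R$, infinitely many terms in some single $X_i$; since $\r_t$ agrees with the original metric on $X_i$ and $X_i$ is compact, this subsequence has a convergent sub-subsequence in $(X_i,\r_t)$, and hence in $(X,\r_t)$. So $(X,\r_t)$ is sequentially compact, and being a metric space by the proof of Theorem~\ref{thm:small}, it belongs to $\cM$. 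The same pigeonhole argument handles the deformation $\nu_t$ that contracts each $X_i$ toward a point.

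Finally, the terminal spaces $M^+$, $M^-$, and $M^{\pm}$ produced in the proof of Theorem~\ref{thm:small} are perturbations of the finite set $M$ by $\pm 2r$, so they are finite and trivially in $\cM$, while the curves between them supplied by Lemma~\ref{lem:M_sigma}(\ref{lem:M_sigma:4}) are linear interpolations of metrics on the fixed finite underlying set $M$ and so also lie entirely in $\cM$. The one real obstacle in the plan is the step sketched in the previous paragraph: confirming that the altered metrics preserve compactness. This is where the finiteness of $M$ is essential --- it is what yields a finite partition of $X$ and, combined with closedness of the unique optimal correspondence, the compactness of every part.
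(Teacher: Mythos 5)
Your proposal is correct and follows the same route as the paper, which simply asserts that for finite $M$ and compact $X$ all the curves from the proof of Theorem~\ref{thm:small} remain in $\cM$; you supply the compactness verification (closedness of the unique optimal correspondence, hence compactness of each $X_i$, and sequential compactness of $(X,\r_t)$ and $(X,\nu_t)$ via the finite partition) that the paper leaves as ``easy to see.'' No gaps.
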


In conclusion, we give several examples of generic spaces.

\begin{examp}
A finite metric space $M$ with pairwise distinct non-zero distances obviously has $s(M)>0$ and $e(M)>0$. Adding a positive constant $\e$ to all non-zero distances we get $t(M)\ge\e$ in addition.
\end{examp}

The  following elegant construction has been suggested to us by Konstantin Shramov and gives an opportunity to obtain a generic metric space of an arbitrary cardinality.

\begin{examp}
Consider an arbitrary infinite set $X$. It is well-known (Zermelo's theorem) that any set can be well-ordered, i.e., there exists a total order such that any subset contains a least element. It is easy to show, see for example~\cite[Ch.~2]{StRoman}, that an order-preserving bijection of a well-ordered set is identical.

Let us fix some well-order on $X$ and denote it by $\le$. Construct an oriented graph $G_o$ on the vertex set $X$ connecting by an oriented edge $(x,y)$ each pair of vertices $x,\,y\in X$ such that $x\lvertneqq y$. It follows from the above that the automorphism group of this graph is trivial. Now reconstruct $G_o$ to another oriented graph $H_o$ changing each its oriented edge $e=(x,y)$ by three vertices $u_e,\,v_e,\,w_e$ and four oriented edges $(x,u_e)$, $(u_e,v_e)$, $(v_e,y)$, and $(v_e,w_e)$. Finally, denote by $H$ the non-oriented graph corresponding to $H_o$.

Show that the automorphism group of the graph $H=(V_H,E_H)$ is trivial. Let $\s\:V_H\to V_H$ be an automorphism of the graph  $H$. Notice that the set $X\ss V_H$ is the set of all vertices from $H$ having infinite degree, therefore the restriction of $\s$ onto $X$ is a bijection of $X$ onto itself. Further, let $e=(x,y)$ be an oriented edge of the graph $G_o$. Let us show that $\big(\s(x),\s(y)\big)$ is also an edge of the graph $G_o$. Assume the contrary, then $f=\big(\s(y),\s(x)\big)$ is an edge of $G_o$ because the order is total. The path $x,\,u_e,\,v_e,\,y$ in $H$ is mapped to the path $\s(x),\,\s(u_e),\,\s(v_e),\,\s(y)$. On the other hand, in $H$ there exists unique $3$-edge path corresponding to the edge $f$, namely, the path $\s(y),\,u_f,\,v_f,\,\s(x)$, therefore $u_f=\s(v_e)$ and $v_f=\s(u_e)$. But  $\deg u_e= \deg u_f =2$, and $\deg v_e= \deg v_f =3$, a contradiction, so $\big(\s(x),\s(y)\big)$ is an edge of $G_o$. The latter means that $\s|_X$ is an automorphism of the oriented graph $G_o$, and hence,  $\s|_X=\id_X$. But then the vertices of each path $x,\,u_e,\,v_e,\,y$ are also fixed, and hence,  $\s=\id_{V_H}$.

Notice that the cardinality of $X$ and $V_H$ are the same. Construct a metric on $V_H$ as follows: put the distances between distinct adjacent vertices equal $1$, and the distance between non-adjacent vertices equal $1+\e$, where $\e\in(0,1)$. Then the resulting metric space $V_H$ is generic, because $s(M)=1$, $t(M)=1-\e$, and $e(M)=\e$.
\end{examp}

This example can be modified easily to obtain an unbounded generic metric space.

\begin{examp}
Let $X$ be an arbitrary generic metric space of a finite diameter $d$, in particular $\#X\ge3$. Put $Y=X\sqcup\{y\}$ and extend metric from $X$ to $Y$ as follows: $|xy|=f$, where $f>d$. It is clear that $s(Y)=s(X)$ and $t(Y)$ remains positive, because all new triangles are isosceles with equal sides of the length $f$ and a smaller side of the  length at least $s(X)>0$. Further, each isometry of $Y$ preserves $y$, and hence, preserves $X$, therefore, $Y$ has no non-trivial isometries. Let $\s$ be a bijection of $Y$.  If $\s$ preserves $y$, then $\dis \s=\dis \s|_X\ge e(X)>0$. Otherwise, take a point $x$ such that $\s(x)\ne y$ (such point exists, because $\#X\ge3$), then $\dis\s\ge\big||yx|-\big|\s(y)\s(x)\big|\big|\ge(f-d)>0$. Thus, $e(Y)>0$ anyway, and hence, $Y$ is a generic metric space also.

This construction permits to obtain an unbounded generic metric space starting from a bounded metric space by adding point by point and taking the distance to next new point greater by $1$ than to the previous one. Notice that if the initial space is infinite, then the resulting space has the same cardinality.
\end{examp}

Next example is based on another idea and gives a countable generic metric space of infinite diameter.

\begin{examp}
Consider a geometric progression $\{q^n\}_{n=0}^\infty\ss\R$, where $q>4$, and define a metric space $X=\{x_i\}$ with the distances $|x_ix_j|=|q^i-q^j|+1$. It is clear that  $s(X)>4$ and $t(X)=1$. Further, let $\s\in S(X)$ be a nontrivial bijection. Then there exists $m$ such that  $\s(x_m)=x_n$ and $n\ne m$. Now, take $k$ as large that $k$ and $l$, where $\s(x_k)=x_l$, are greater than $m$ and $n$.  We have:
$$
\dis\s\ge\big||x_mx_k|-|\s(x_m)\s(x_k)|\big|=\big||x_mx_k|-|x_nx_l|\big|=\big||q^m-q^k|-|q^n-q^l|\big|.
$$
If $k\ne l$, then $\dis\s\ge q^{\max\{k,l\}}\big|1-3/q\big|>q$, because $\max\{k,l\}\ge 3$. Otherwise, $\dis\s\ge|q^m-q^n|\ge q^{\max\{m,n\}}\big|1-1/q\big|\ge q-1$, because $\max\{m,n\}\ge 1$. Thus, $e(X)\ge q-1>3$, and so, $X$ is a generic space.
\end{examp}

\vfill\eject

\markright{References}

\end{document}